\newtheorem{Thm}{Theorem}[section]
\newtheorem{Cor}{Corollary}
\newtheorem{MyTh}{Main Theorem}
\newtheorem{Lem}[Thm]{Lemma}
\newtheorem{Prop}{Proposition}
\newtheorem{Ex}{Example}
\theoremstyle{definition}
\newtheorem{Def}[Thm]{Definition}
\newtheorem{Rem}{Remark}
\title[Construction of Lyapunov Functions Using HHD] 
      {Construction of Lyapunov functions using Helmholtz--Hodge decomposition}
\author[Tomoharu Suda]{}
\subjclass{Primary: 37B25; Secondary: 37C10, 37B35.}
 \keywords{Helmholtz-Hodge decomposition, Lyapunov functions, gradient vector fields, strict orthogonality, potential functions.}
\email{suda.tomoharu.88s@st.kyoto-u.ac.jp}
\thanks{The first author is supported by Grant-in-Aid for JSPS Fellows (17J03931).}
\begin{document}
\maketitle

\centerline{\scshape Tomoharu Suda}
\medskip
{\footnotesize
 \centerline{Graduate School of Human and Environmental Studies, Kyoto University}
   \centerline{Yoshida-nihonmatsu-cho, Sakyo-ku, Kyoto, 606-8501, Japan}
} 

\bigskip
 \centerline{(Communicated by Bernold Fiedler)}
\begin{abstract} The Helmholtz--Hodge decomposition (HHD) is applied to the construction of Lyapunov functions. It is shown that if a stability condition is satisfied, such a decomposition can be chosen so that its potential function is a Lyapunov function. In connection with the Lyapunov function, vector fields with strictly orthogonal HHD are analyzed. It is shown that they are a generalization of  gradient vector fields and have similar properties. Finally, to examine the limitations of the proposed method, planar vector fields are analyzed.
\end{abstract}

\section{Introduction}

The Helmholtz--Hodge decomposition (HHD) is a decomposition of vector fields whereby they are expressed as the sum of a gradient vector field and a divergence-free vector field. It is applied to the study of Navier--Stokes equations (projection method) and to the detection and visualization of singularities of vector fields. For a review of the literature on HHD, the reader may refer to \cite{Bhatia_The_2013}. There are several methods for obtaining HHD. The most naive is by solving the Poisson equation, as its solution yields such a decomposition \cite{Geng_The_2010}. Thus, an HHD exists, provided that the Poisson equation has a solution. However, a certain boundary condition, whose choice is not obvious, must be prescribed. This condition is often imposed so that the two vector fields in the decomposition are orthogonal in the $L^2$ sense, and it is discussed in \cite{Denaro_On_2003}. Other techniques to obtain an HHD involve the construction of basis functions or Green's function \cite{ Bhatia_The_2014,Fuselier_A_2016} and thus avoid the complications related to the choice of boundary conditions.

Although there are examples of HHD applied to the detection of singularities of vector fields \cite{Polthier_Identifying_2003,Wiebel_Feature}, few studies have been concerned with HHD from the viewpoint of dynamical systems. In the planar case, Demongeot, Glade, and Forest proposed a scheme for the analysis of planar vector fields with a limit cycle, in which they constructed a polynomial vector field so that it has a limit cycle resembling that of the original vector field \cite{Demongeot_Li_2007, Demongeot_Li_2007_2}. Mendes and Duarte approximated $n$-dimensional vector fields by divergence-free vector fields \cite{Duarte_Deformation_1983}. Even though these methods are different, their aim is to determine a function that best captures the behavior of the given vector field using HHD.

To describe vector fields using a function, a natural option is to consider a Lyapunov function. A Lyapunov function of a vector field is a smooth function that takes minimum value at an equilibrium point  (more generally, on an invariant set), and decreases along solution curves in a neighborhood of the equilibrium point. The potential function of gradient vector fields is an example of Lyapunov function. Even though its existence is known for asymptotically stable equilibrium points of vector fields, the actual construction is not obvious except in the case of relatively simple vector fields. Several methods have been proposed for numerically constructing Lyapunov functions, and a comprehensive review can be found in \cite{Giesl_Review_2015}. They vary depending on the nature of the vector fields. In particular, methods based on linear programming are often applied to linear systems; however they may not be used for general vector fields. For an example of this construction, the reader is referred to \cite{599986}. If the vector fields are nonlinear, other techniques may be applied. Lyapunov functions can be constructed by considering a partial differential equation whose solution yields a Lyapunov function and then approximating this solution. A detailed discussion on such a method can be found in \cite{giesl2007construction}. This approach can handle a variety of vector fields; however, it cannot readily be associated with the resulting function in terms of dynamics.

From the viewpoint of dynamical systems, it appears natural to choose an HHD whose gradient vector field is generated by a Lyapunov function. Indeed, the method of Demongeot, Glade, and Forest can be interpreted as an attempt to choose an HHD whose potential function is a Lyapunov function of a limit cycle \cite{Demongeot_Li_2007_2}. Thus, it appears possible that a ``good'' choice of HHD yields a Lyapunov function. Furthermore, a Lyapunov function thereby obtained is naturally interpreted as the dissipative part of the original vector field. Hence, it is likely to offer insight into its dynamics.

In this study, the construction of Lyapunov functions using HHD is considered. It is shown that, under certain stability conditions, this is possible, and a method is provided for formulating it as an optimization problem. Moreover, strictly orthogonal HHDs are considered. This notion of orthogonality differs from the $L^2$-orthogonality commonly imposed in the literature meaning that the two vector fields obtained by the decomposition are orthogonal everywhere. It is proved that vector fields with strictly orthogonal HHD are similar to gradient vector fields and their behavior is completely determined in terms of potential functions that are Lyapunov functions of invariant sets. Finally, the limitations of the proposed method are examined in the case of planar vector fields.

\medskip
\noindent{\bf Main Results.}
\noindent The terminology used in this study is first provided. It is assumed that all vector fields are  $C^2$, unless indicated otherwise. Vectors will be denoted by bold fonts.  Autonomous differential equations are considered and will be given in the form
\begin{equation}\label{diffeq}
\dot {\bf x} = {\bf F}({\bf x}),
\end{equation}
 where ${\bf F}({\bf x})$ is a smooth vector field defined on $\bar \Omega$, where $\Omega \subset {\mathbb R}^n$ is an open domain which contains the origin. In what follows, unless otherwise stated, it is assumed that the domain $\Omega$ is bounded and has $C^2$-boundary.
\begin{Def}[Orbital derivative]
\normalfont
For a $C^1$-function $f$ on a domain $\Omega \subset {\mathbb R}^n$, its {\bf orbital derivative} $\dot{f}$ along solutions of  the equation (\ref{diffeq}) is a function defined by
$$\dot{f}({\bf x}(t)) :=  \frac{d}{dt} f\left({\bf x}(t)\right) = \sum_{i}\frac{\partial f}{\partial x_i}\frac{d x_i(t)}{d t},$$
where ${\bf x}(t) $ is a solution of the equation (\ref{diffeq}). Since the vector field is autonomous, the orbital derivative $\dot{f}$ is given explicitly as a function of ${\bf x}$, namely,
$$\dot{f}({\bf x}) = \nabla f ({\bf x}) \cdot {\bf F}({\bf x}). $$
\end{Def}

Lyapunov functions are defined as follows, requiring that they take a minimum value at an equilibrium point.
\begin{Def}[Lyapunov functions]
		Let ${\bf x}_*$ be an equilibrium point and $U \subset {\mathbb R}^n$ a neighborhood of ${\bf x}_*$. A $C^1$-function $L:U\rightarrow{\mathbb R}$ is said to be a {\bf (local) Lyapunov function} if the following hold:
		\begin{enumerate}
			\item For all ${\bf x}\in U- \{{\bf x}_*\}$, we have $L({\bf x})> L({\bf x}_*).$
			\item On the set $U- \{{\bf x}_*\}$, we have ${\dot L}\leq 0.$
		\end{enumerate}
\end{Def}
In this study, the Helmholtz--Hodge decomposition is defined as follows.
\begin{Def}[Helmholtz--Hodge decomposition]
	\normalfont
		For a vector field $ {\bf F}$ on a domain $\Omega \subset {\mathbb R}^n$, its {\bf Helmholtz--Hodge decomposition} ({\bf HHD}) is a decomposition of the form
	$$ {\bf F} = -\nabla V + {\bf u},$$
where $V: \Omega \rightarrow\mathbb{R}$ is a $C^3$ function and ${\bf u}$ is a vector field on $\Omega$ with the property $\nabla \cdot {\bf u} = 0$. $V$ is called a {\bf potential function}.
\end{Def}
From the definition, it can be seen that the addition of a harmonic function $h$ to the potential function $V$ yields another decomposition, $ {\bf F} = -\nabla\left( V + h\right) + \left({\bf u} + \nabla h \right)$. Therefore HHD is not unique in general. The existence of HHD is established if there is a solution of the Poisson equation obtained by taking the divergence of the equation in the definition.  Such a solution exists if the boundary of the domain under consideration is, for example, sufficiently smooth. More details may be found in \cite{Geng_The_2010}.  By the assumption on the smoothness of $\partial \Omega$, at least one HHD exists for the vector field considered on $\Omega$.

In the analysis of potential functions, the following concept is used.
\begin{Def}[Singular values of a matrix]
Let $A$ be a square real matrix of order $n$. Then, the matrix $\,^{t} A A$ is nonnegative definite; therefore, it has positive eigenvalues. The square root of an eigenvalue of  $\,^{t} A A$ is called a {\bf singular value} of $A$.
\end{Def}
\noindent  The singular values of a matrix are denoted by Greek letters, for example, $\lambda$ or $\mu$. It is easily verified that a matrix and its transpose have the same set of singular values. More details may be found in \cite{strang2006linear}.

The main results of this study will now be stated. It is first shown that under a certain stability condition, it is possible to obtain an HHD with its potential function assuming a minimum at an equilibrium point.

\begin{MyTh}\label{Const_Potential}\label{Lya}
Let  ${\bf F}:\bar \Omega \to {\mathbb R}^n$ be a vector field with an equilibrium point at the origin. If $\nabla \cdot {\bf F}({\bf 0}) < 0$, then it has an HHD with the potential function $V$ having a minimum at the origin.
\end{MyTh}
To obtain a Lyapunov function,  a function whose orbital derivative decreases monotonically should be determined. This may be achieved by controlling the Jacobian matrix of the divergence-free vector field ${\bf u}$ and the Hessian of the potential function  $V$. The next theorem provides a sufficient condition whereby the potential function is a Lyapunov function in a form suitable for optimization. The Jacobian matrix of ${\bf u}$ and ${\bf F}$ at the origin is denoted by $D {\bf u}_{\bf 0}$ and $D {\bf F}_{\bf 0}$, respectively.
\begin{MyTh}\label{opti_thm}
	Let ${\bf F}: \bar \Omega \to {\mathbb R}^n$ be a vector field with an equilibrium point at the origin and ${\bf F} = - \nabla V + {\bf u}$ be an HHD, assume $V$ has a minimum at the origin, and let the largest singular value of $D {\bf u}_{\bf 0}$ be $\lambda _{\bf u}$, the smallest singular value of  $D {\bf F}_{\bf 0}$ be $\mu _{\bf F}$, and the smallest eigenvalue of ${\rm Hess} \,V$ be $\mu_V$. If $ \lambda _{\bf u} ^2 - \mu_V ^2 < \mu _{\bf F} ^2$, then $V$ is a (local) Lyapunov function of the origin.
\end{MyTh}
A simple example of the use of Main Theorem \ref{opti_thm} will be given in Example \ref{Lya}.

In connection with Lyapunov functions, vector fields with strictly orthogonal HHD are an example where the analysis based on potential function succeeds.
	\begin{Def}
	HHD ${\bf F} =- \nabla V+ {\bf u} $ is said to be {\bf strictly orthogonal} on $D \subset {\mathbb R}^n$ if ${\bf u}({\bf x}) \cdot  \nabla V({\bf x})= 0$ for all ${\bf x}\in D$.
	\end{Def}
Although vector fields with strictly orthogonal HHD  exhibit behavior different from that of gradient vector fields, they are in fact a generalization of them, as the next theorem states.
\begin{MyTh}\label{ohhd}
	Let $D \subset {\mathbb R}^n$ be a bounded domain. If a vector field ${\bf F}:{\mathbb R}^n \to {\mathbb R}^n$ has a strictly orthogonal HHD on $\bar D$ with potential function $V$, then the following hold:
		\begin{enumerate}
		\item  If $D$ is forward invariant, then for all ${\bf x} \in D$, $$\omega({\bf x}) \subset \{{\bf y} \in {\bar D}\,| \,\nabla V({\bf y}) = {\bf 0}\}.$$
		\item If $D$ is backward invariant, then for all ${\bf x} \in D$,$$\alpha({\bf x}) \subset \{{\bf y} \in {\bar D}\,| \,\nabla V({\bf y}) = {\bf 0}\}.$$
		\end{enumerate}
	\end{MyTh}

This paper is organized as follows. In Section 2, it is shown that it is possible to construct Lyapunov functions by properly choosing an HHD. Moreover, Main Theorems \ref{Lya} and  \ref{opti_thm} are proved. In Section 3, the properties of strictly orthogonal HHDs are considered. Main Theorem \ref{ohhd} is proved, and its compatibility with a boundary condition widely used in literature is further studied. In Section 4,  to discuss the limitations of the analysis based on HHD, planar vector fields are analyzed using Fourier series expansions.

\section{Construction of Lyapunov functions using HHD}
In this section, the construction of Lyapunov functions near an equilibrium point is considered.  The emphasis here is not on solving actual problems, but on showing that it is possible to construct Lyapunov functions using HHD. Without loss of generality, it may be assumed that the equilibrium point is the origin.

The aim here is to choose the potential functions of an HHD so that they can be used as Lyapunov functions. To achieve this, it is first ensured that the potential function assumes a minimum value at the equilibrium point.
\begin{Thm}[Main Theorem \ref{Lya}]\label{Const_Potential}
Let  ${\bf F}: \bar \Omega \to {\mathbb R}^n$ be a vector field with an equilibrium point at the origin. If $\nabla \cdot {\bf F}({\bf 0}) < 0$, then it has an HHD such that the potential function $V$ has a minimum at the origin.
\end{Thm}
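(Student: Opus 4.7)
The plan is to exploit the non-uniqueness of the HHD: if $\mathbf{F} = -\nabla V_0 + \mathbf{u}_0$ is any HHD (which exists by the smoothness assumption on $\partial\Omega$), then for every harmonic function $h$ on $\Omega$, the decomposition $\mathbf{F} = -\nabla(V_0 + h) + (\mathbf{u}_0 + \nabla h)$ is again an HHD (since $\Delta h = 0$ implies $\nabla \cdot (\mathbf{u}_0 + \nabla h) = 0$). The strategy is therefore to choose a particular harmonic polynomial $h$ so that $V := V_0 + h$ has a strict local minimum at the origin.

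I would carry out the construction in two stages. First, I would fix $\nabla V(\mathbf{0}) = \mathbf{0}$ by absorbing a linear correction into $h$, namely adding the term $-\nabla V_0(\mathbf{0}) \cdot \mathbf{x}$, which is harmonic. Second, I would adjust the Hessian at the origin by adding a quadratic harmonic polynomial $\tfrac{1}{2}\mathbf{x}^{T} A \mathbf{x}$, where $A$ is a symmetric matrix with $\operatorname{tr} A = 0$ (this last condition is precisely harmonicity of the quadratic form). Adding such a polynomial replaces $\operatorname{Hess} V_0(\mathbf{0})$ by $\operatorname{Hess} V_0(\mathbf{0}) + A$, while leaving the trace unchanged.

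The stability hypothesis enters exactly here. Taking the divergence of $\mathbf{F} = -\nabla V_0 + \mathbf{u}_0$ gives $\operatorname{tr}(\operatorname{Hess} V_0(\mathbf{0})) = -\nabla \cdot \mathbf{F}(\mathbf{0}) > 0$. Write $t := -\nabla \cdot \mathbf{F}(\mathbf{0}) > 0$ and set
\[
A := \tfrac{t}{n} I - \operatorname{Hess} V_0(\mathbf{0}).
\]
Then $A$ is symmetric and traceless, so the quadratic form $\tfrac{1}{2}\mathbf{x}^{T}A\mathbf{x}$ is harmonic. The resulting potential
\[
V(\mathbf{x}) := V_0(\mathbf{x}) - \nabla V_0(\mathbf{0}) \cdot \mathbf{x} + \tfrac{1}{2}\mathbf{x}^{T} A\mathbf{x}
\]
satisfies $\nabla V(\mathbf{0}) = \mathbf{0}$ and $\operatorname{Hess} V(\mathbf{0}) = \tfrac{t}{n} I$, which is positive definite. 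Hence $V$ has a strict local minimum at the origin.

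There is no serious obstacle: the only nontrivial ingredient is the existence of a symmetric traceless matrix $A$ making $\operatorname{Hess} V_0(\mathbf{0}) + A$ positive definite, and the sign hypothesis $\nabla \cdot \mathbf{F}(\mathbf{0}) < 0$ is precisely what makes the natural choice $A = \tfrac{t}{n} I - \operatorname{Hess} V_0(\mathbf{0})$ work. Regularity is automatic because the added harmonic function is a polynomial of degree two, so $V$ is as smooth as $V_0$.
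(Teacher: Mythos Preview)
Your proof is correct and follows essentially the same route as the paper: start from an arbitrary HHD, add the linear harmonic correction $-\nabla V_0(\mathbf{0})\cdot\mathbf{x}$ to kill the gradient, then add the traceless quadratic harmonic form built from $A=\tfrac{t}{n}I-\operatorname{Hess}V_0(\mathbf{0})$ to force the Hessian to be $\tfrac{t}{n}I$. The only cosmetic difference is that the paper writes the quadratic correction as $^{t}\mathbf{x}A\mathbf{x}$ rather than $\tfrac{1}{2}\mathbf{x}^{T}A\mathbf{x}$; your normalization is the one that makes the Hessian computation literally correct.
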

\begin{proof}
An HHD of ${\bf F}$ is first chosen, that is,
	$${\bf F} = -\nabla \tilde V + \tilde {\bf u}.$$
This is possible because an HHD exists in $\Omega$. If $\nabla \tilde V ({\bf 0}) \neq {\bf 0}$, then $\tilde V({\bf x})$ is replaced by $\tilde V({\bf x}) - \nabla \tilde V ({\bf 0}) \cdot {\bf x}$, which also yields an HHD. It should be noted that $\nabla \tilde V ({\bf 0})$ is merely a constant vector. Therefore, it may always be assumed that $\nabla \tilde V ({\bf 0}) = {\bf 0}$.

The Hessian of $V$ is denoted by ${\rm Hess}\, V$. As
	\begin{eqnarray*}
		\nabla \cdot {\bf F} &=& - \nabla \cdot \nabla \tilde V \\
							&=& - \sum_{i=1}^{n} \frac{\partial^2 \tilde V}{\partial x_{i} ^ 2}\\
								&=& - {\rm tr}\, {\rm Hess} \,\tilde V,
	\end{eqnarray*}
then ${\rm tr}\, {\rm Hess}\, \tilde V$ is positive at the origin. Therefore, the matrix $\frac{\rho}{n} I$ is positive definite, where $\rho$ is the value of ${\rm tr}\, {\rm Hess}\, \tilde V$ at the origin. Let the symmetric matrix $A$ be defined by $- {\rm Hess} \,\tilde V +\frac{\rho}{n} I $. Then, ${\rm tr}\, A = 0$, and $\,^{t}{\bf x} A {\bf x}\,$ is a harmonic function.

Let $V$ be defined by $\tilde V + \,^{t}{\bf x} A {\bf x}$. This function also yields an HHD ${\bf F} = -\nabla V + {\bf u}$, because $\,^{t}{\bf x} A {\bf x}$ is a harmonic function. As $\nabla V ({\bf 0}) = {\bf 0}$ and the Hessian of $V$ at the origin is
	\begin{eqnarray*}
		{\rm Hess}\, V &=& {\rm Hess}\, \tilde V + A\\
						&=& \frac{\rho}{n} I,
	\end{eqnarray*}
	we conclude that $V$ takes a minimum value at the origin, owing to diagonality and positive definiteness.
\end{proof}

\begin{Rem}
 The condition in the hypothesis of Main Theorem \ref{Lya} is related to the Jacobian matrix of the vector field at the origin via the identity  $\nabla \cdot {\bf F}({\bf 0}) = {\rm tr} D {\bf F}_{\bf 0}$.
If this hypothesis fails, there are two possibilities: $\nabla \cdot {\bf F}({\bf 0})>0$ or $\nabla \cdot {\bf F}({\bf 0})=0$.  In the former case, the origin cannot be a stable equilibrium point, and there is no Lyapunov function for it. In the latter case, the origin is a saddle, a center, or a degenerate equilibrium.
\end{Rem}
Even though the condition that $V$ takes a minimum value at the origin does not imply that it is a Lyapunov function, we have the following result.

\begin{Prop}\label{min_p}
	Let  ${\bf F}: \bar \Omega \to {\mathbb R}^n$ be a vector field with an equilibrium point at the origin, and let ${\bf F} = - \nabla V + {\bf u}$ be an HHD. If $V$ assumes a minimum at the origin, then $\dot V ({\bf 0}) = 0$ and the origin is a critical point of $\dot V$. The Hessian of $\dot V$ is given by $ \,^{t} \left(D {\bf F}_{\bf 0} \right)\left({\rm Hess} \, V \right) + \left({\rm Hess} \, V \right) D {\bf F}_{\bf 0}$.
\end{Prop}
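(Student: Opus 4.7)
The plan is to exploit the product structure of $\dot V = \nabla V \cdot {\bf F}$ together with the two vanishing conditions available at the origin: ${\bf F}({\bf 0}) = {\bf 0}$ (equilibrium) and $\nabla V({\bf 0}) = {\bf 0}$ (the critical-point condition coming from the minimum hypothesis). Each derivative of $\dot V$ will produce sums of products in which at least one factor is a component of ${\bf F}$ or of $\nabla V$, and evaluating at the origin will kill all terms that still contain such a factor. This reduces everything to a direct calculation.

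First, I would note that $\dot V({\bf 0}) = \nabla V({\bf 0}) \cdot {\bf F}({\bf 0}) = 0$ immediately. Then I would differentiate once using the product rule:
\begin{equation*}
\frac{\partial \dot V}{\partial x_i} = \sum_{j} \frac{\partial^2 V}{\partial x_i \partial x_j} F_j + \sum_{j} \frac{\partial V}{\partial x_j} \frac{\partial F_j}{\partial x_i}.
\end{equation*}
At the origin every term contains either $F_j({\bf 0}) = 0$ or $\partial_j V({\bf 0}) = 0$, so $\nabla \dot V({\bf 0}) = {\bf 0}$, establishing the critical-point claim.

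Next, I would differentiate once more to obtain the Hessian. Applying $\partial_k$ to the expression above produces four terms, two of which still carry a factor $F_j$ or $\partial_j V$ and hence vanish at the origin. The two surviving contributions are
\begin{equation*}
\frac{\partial^2 \dot V}{\partial x_k \partial x_i}({\bf 0}) = \sum_{j} \frac{\partial^2 V}{\partial x_i \partial x_j}({\bf 0}) \frac{\partial F_j}{\partial x_k}({\bf 0}) + \sum_{j} \frac{\partial F_j}{\partial x_i}({\bf 0}) \frac{\partial^2 V}{\partial x_k \partial x_j}({\bf 0}).
\end{equation*}
Recognising $(D{\bf F}_{\bf 0})_{jk} = \partial_k F_j({\bf 0})$ and using the symmetry of $\mathrm{Hess}\,V$, the first sum is the $(i,k)$-entry of $(\mathrm{Hess}\,V)\,D{\bf F}_{\bf 0}$ and the second is the $(i,k)$-entry of $\,^{t}(D{\bf F}_{\bf 0})\,(\mathrm{Hess}\,V)$, giving the stated formula.

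There is no real obstacle here; the only place where care is needed is the index bookkeeping in the last step, to make sure the transpose lands on the correct factor of $D{\bf F}_{\bf 0}$. Everything else is a direct application of the product rule together with the two vanishing conditions at the origin.
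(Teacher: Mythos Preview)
Your argument is correct. The only difference from the paper is that the paper first substitutes the decomposition ${\bf F}=-\nabla V+{\bf u}$, writing $\dot V=-|\nabla V|^2+\nabla V\cdot{\bf u}$, differentiates in terms of $V$ and ${\bf u}$, obtains the intermediate expression
\[
{\rm Hess}\,\dot V=-2({\rm Hess}\,V)^2+{}^{t}(D{\bf u}_{\bf 0})({\rm Hess}\,V)+({\rm Hess}\,V)D{\bf u}_{\bf 0},
\]
and only then converts back via $D{\bf F}_{\bf 0}=-{\rm Hess}\,V+D{\bf u}_{\bf 0}$. Your route, working directly with $\dot V=\nabla V\cdot{\bf F}$, is shorter and in fact shows that the HHD hypothesis is irrelevant to this proposition: all that is used is $\nabla V({\bf 0})={\bf 0}$ and ${\bf F}({\bf 0})={\bf 0}$. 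The paper's detour is not wasted, however, because the intermediate formula in terms of $D{\bf u}_{\bf 0}$ is exactly what is exploited in the proof of Main Theorem~\ref{opti_thm}.
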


\begin{proof}
	By the definition of the orbital derivative, we have $\dot V = \nabla V \cdot {\bf F}= - \left|\nabla V \right|^2 + \nabla V \cdot {\bf u} $. As $V$ assumes a minimum at the origin,  $\dot V ({\bf 0})$ is zero.

The first-order derivatives of $\dot V$ are
	\begin{eqnarray*}
		\frac{\partial \dot V}{\partial x_{i}} &=& \frac{\partial}{\partial x_{i}} \sum_{l = 1}^n \left( -\frac{\partial V}{\partial x_{l}} \frac{\partial V}{\partial x_{l}} + \frac{\partial V}{\partial x_{l}} u_l \right)\\
				&=& -2 \sum_{l = 1}^n\frac{\partial^2 V}{\partial x_{l} \partial x_{i}} \frac{\partial V}{\partial x_{l}} + \sum_{l = 1}^n\frac{\partial^2 V}{\partial x_{l} \partial x_{i}} u_l +\sum_{l = 1}^n\frac{\partial V}{\partial x_{l}} \frac{\partial u_l}{\partial x_i}.
	\end{eqnarray*}
The first and last terms vanish at the origin because $V$ assumes a minimum. As ${\bf u}({\bf 0}) = {\bf F}({\bf 0}) + \nabla V ({\bf 0}) = 0$, the second term also vanishes. Therefore, the origin is a critical point of $\dot V$.

The second-order derivatives of $\dot V$ at the origin are
$$
	\frac{\partial^2 \dot V}{\partial x_i \partial x_j} = -2 \sum_{l = 1}^n\frac{\partial^2 V}{\partial x_{l} \partial x_{i}} \frac{\partial^2 V}{\partial x_{l} \partial x_j} + \sum_{l = 1}^n\frac{\partial^2 V}{\partial x_{l} \partial x_{j}} \frac{\partial u_l}{\partial x_i} +\sum_{l = 1}^n\frac{\partial^2 V}{\partial x_{l} \partial x_i} \frac{\partial u_l}{\partial x_j}.
$$
Therefore,
$$ {\rm Hess} \, \dot V = -2  \left({\rm Hess} \, V \right)\left(  {\rm Hess} \, V \right) +  \,^t \left(D {\bf u}_{\bf 0}\right) \left({\rm Hess} \, V \right) + \left({\rm Hess} \, V \right) D {\bf u}_{\bf 0}. $$
As $D {\bf F}_{\bf 0} = -  \left({\rm Hess} \, V \right) +D {\bf u}_{\bf 0}$, this is equivalent to $ \, ^t \left(D {\bf F}_{\bf 0} \right)\left({\rm Hess} \, V \right) + \left({\rm Hess} \, V \right) D {\bf F}_{\bf 0}$.
\end{proof}
To summarize, an HHD of a vector field ${\bf F}$ with an equilibrium point at the origin yields a Lyapunov function if its potential function $V$ assumes a minimum at the origin and the matrix $ \,^t D {\bf F}_{\bf 0} \left({\rm Hess} \, V \right) + \left({\rm Hess} \, V \right) D {\bf F}_{\bf 0} $ is negative definite. If these hypotheses are satisfied, $\dot V$ takes a maximum value of $0$ at the origin; therefore, $\dot V \leq 0$ in a neighborhood of the origin. For example, we have the following result.

\begin{Cor}
If $\left<D {\bf F} _0 {\bf x},{\bf x}\right>\,< 0$ for all ${\bf x} \in {\mathbb R}^n \backslash \{{\bf 0}\}$ in addition to the conditions in Main Theorem \ref{Lya}, the potential function $V$ constructed in Main Theorem \ref{Lya} is a Lyapunov function.
\end{Cor}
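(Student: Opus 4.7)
The plan is to combine the explicit form of $\operatorname{Hess} V$ at the origin obtained in Main Theorem \ref{Lya} with the Hessian formula for $\dot V$ provided by Proposition \ref{min_p}, and then read off sign-definiteness from the quadratic form hypothesis.

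First I would recall the construction: from the proof of Main Theorem \ref{Lya}, the potential function $V$ satisfies $\nabla V({\bf 0}) = {\bf 0}$ and $\operatorname{Hess} V({\bf 0}) = \frac{\rho}{n} I$, where $\rho = -\nabla\cdot{\bf F}({\bf 0}) > 0$. In particular $\operatorname{Hess} V$ is positive definite at the origin, so $V$ attains a strict local minimum there, which takes care of the first Lyapunov condition.

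Next I would apply Proposition \ref{min_p}, which already gives that $\dot V({\bf 0}) = 0$ and that the origin is a critical point of $\dot V$, with
\[
\operatorname{Hess} \dot V({\bf 0}) = \,^{t}\!\left(D{\bf F}_{\bf 0}\right)\operatorname{Hess} V + \operatorname{Hess} V \cdot D{\bf F}_{\bf 0} = \frac{\rho}{n}\left(\,^{t}\!D{\bf F}_{\bf 0} + D{\bf F}_{\bf 0}\right).
\]
The hypothesis $\langle D{\bf F}_{\bf 0}{\bf x},{\bf x}\rangle < 0$ for all nonzero ${\bf x}$ is equivalent to the symmetric part $\tfrac{1}{2}(\,^{t}\!D{\bf F}_{\bf 0} + D{\bf F}_{\bf 0})$ being negative definite, since $\langle D{\bf F}_{\bf 0}{\bf x},{\bf x}\rangle = \langle \tfrac{1}{2}(\,^{t}\!D{\bf F}_{\bf 0} + D{\bf F}_{\bf 0}){\bf x},{\bf x}\rangle$. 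Because $\rho/n > 0$, this makes $\operatorname{Hess} \dot V({\bf 0})$ negative definite.

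Finally, since the origin is a critical point of $\dot V$, the value $\dot V({\bf 0}) = 0$, and the Hessian of $\dot V$ at the origin is negative definite, $\dot V$ has a strict local maximum at the origin, so $\dot V < 0$ in some punctured neighborhood of the origin. Together with the strict local minimum of $V$, this verifies both conditions in the definition of a Lyapunov function. There is no real obstacle here; the only thing to check carefully is the equivalence between the quadratic-form hypothesis on $D{\bf F}_{\bf 0}$ (which need not be symmetric) and negative definiteness of its symmetric part, which is a standard observation.
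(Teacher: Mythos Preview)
Your proof is correct and follows essentially the same route as the paper: both use the explicit form $\operatorname{Hess} V({\bf 0}) = \frac{\rho}{n} I$ from Main Theorem \ref{Lya}, plug it into the Hessian formula of Proposition \ref{min_p}, and read off negative definiteness of $\operatorname{Hess}\,\dot V$ directly from the hypothesis on the quadratic form of $D{\bf F}_{\bf 0}$. The only difference is cosmetic---the paper computes the quadratic form $\langle \operatorname{Hess}\,\dot V\,{\bf x},{\bf x}\rangle = \frac{2\rho}{n}\langle D{\bf F}_{\bf 0}{\bf x},{\bf x}\rangle$ directly, while you first pass to the matrix identity $\operatorname{Hess}\,\dot V = \frac{\rho}{n}(\,^{t}D{\bf F}_{\bf 0}+D{\bf F}_{\bf 0})$---but these are the same argument.
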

\begin{proof}
	As the potential function $V$ constructed in Main Theorem \ref{Lya} assumes a minimum at the origin, Proposition \ref{min_p} can be applied. We need only show that $\,^t D {\bf F}_{\bf 0} \left({\rm Hess} \, V \right) + \left({\rm Hess} \, V \right) D {\bf F}_{\bf 0} $ is negative definite.

As the Hessian of the potential function $V$ constructed in Main Theorem \ref{Lya} is  $\frac{\rho}{n} I,$ we have
	$$
		 \left<\left( \,^t D {\bf F}_{\bf 0} \left({\rm Hess} \, V \right) + \left({\rm Hess} \, V \right) D {\bf F}_{\bf 0} \right){\bf x},{\bf x}\right> = \frac{2 \rho}{n}\left <D {\bf F} _0 {\bf x},{\bf x}\right>\,< 0
	$$
if ${\bf x} \neq 0$.
\end{proof}
\begin{Rem}
\normalfont
 The condition that $\left<D {\bf F} _0 {\bf x},{\bf x}\right>\,< 0$ for all ${\bf x} \in {\mathbb R}^n \backslash \{{\bf 0}\}$ is equivalent to the symmetric matrix $D {\bf F} _0 + \,^t D {\bf F} _0 $ being negative definite. Therefore, the stability of the origin is easily obtained without resort to Lyapunov functions.
\end{Rem}
Even though a potential function that assumes a minimum at the origin can be chosen under a relatively mild condition, it is not obvious that $ \,^t D {\bf F}_{\bf 0} \left({\rm Hess} \, V \right) + \left({\rm Hess} \, V \right) D {\bf F}_{\bf 0} $ can be chosen so as to be negative definite. Thus, this problem is reformulated as an optimization problem, as shown by the next theorem.
\begin{Thm}[Main Theorem \ref{opti_thm}]
Let ${\bf F}: \bar \Omega \to {\mathbb R}^n$ be a vector field with an equilibrium point at the origin and ${\bf F} = - \nabla V + {\bf u}$ be an HHD, assume $V$ has a minimum at the origin, and let the largest singular value of $D {\bf u}_{\bf 0}$ be $\lambda _{\bf u}$, the smallest singular value of  $D {\bf F}_{\bf 0}$ be $\mu _{\bf F}$, and the smallest eigenvalue of ${\rm Hess} \,V$ be $\mu_V$. If $ \lambda _{\bf u} ^2 - \mu_V ^2 < \mu _{\bf F} ^2$, then $V$ is a (local) Lyapunov function of the origin.
\end{Thm}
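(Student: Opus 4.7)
The plan is to reduce the claim to showing that the Hessian of $\dot V$ at the origin is negative definite, since Proposition \ref{min_p} already tells us that $V$ has a minimum at the origin (given), $\dot V({\bf 0})=0$, and the origin is a critical point of $\dot V$. Once $\mathrm{Hess}\,\dot V({\bf 0})$ is shown negative definite, $\dot V$ attains a strict local maximum of $0$ at the origin, so $\dot V\leq 0$ on a punctured neighborhood, and together with the hypothesis on $V$ this makes $V$ a local Lyapunov function.

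The key algebraic step will be to rewrite the symmetric matrix
\[
M := {}^{t}(D{\bf u}_{\bf 0})\,(\mathrm{Hess}\,V) + (\mathrm{Hess}\,V)\,D{\bf u}_{\bf 0}
\]
appearing in the expression for $\mathrm{Hess}\,\dot V$ (from Proposition \ref{min_p}) using the HHD relation $D{\bf u}_{\bf 0} = D{\bf F}_{\bf 0} + \mathrm{Hess}\,V$. Expanding ${}^{t}(D{\bf F}_{\bf 0})\,D{\bf F}_{\bf 0}$ with this identity and subtracting ${}^{t}(D{\bf u}_{\bf 0})\,D{\bf u}_{\bf 0}$ should give
\[
M = (\mathrm{Hess}\,V)^{2} + {}^{t}(D{\bf u}_{\bf 0})\,D{\bf u}_{\bf 0} - {}^{t}(D{\bf F}_{\bf 0})\,D{\bf F}_{\bf 0},
\]
which, when substituted into the formula $\mathrm{Hess}\,\dot V = -2(\mathrm{Hess}\,V)^{2} + M$, collapses to
\[
\mathrm{Hess}\,\dot V = -(\mathrm{Hess}\,V)^{2} + {}^{t}(D{\bf u}_{\bf 0})\,D{\bf u}_{\bf 0} - {}^{t}(D{\bf F}_{\bf 0})\,D{\bf F}_{\bf 0}.
\]
This is the crucial rewriting: each term on the right is a symmetric matrix whose extreme eigenvalues are precisely the squared singular values (or squared eigenvalues, for $\mathrm{Hess}\,V$) appearing in the hypothesis.

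From here, for any ${\bf x}\neq {\bf 0}$, the inner product $\langle \mathrm{Hess}\,\dot V\cdot{\bf x},{\bf x}\rangle$ equals $-|(\mathrm{Hess}\,V){\bf x}|^{2} + |D{\bf u}_{\bf 0}{\bf x}|^{2} - |D{\bf F}_{\bf 0}{\bf x}|^{2}$. Applying the standard bounds $|D{\bf u}_{\bf 0}{\bf x}|^{2}\leq \lambda_{\bf u}^{2}|{\bf x}|^{2}$, $|D{\bf F}_{\bf 0}{\bf x}|^{2}\geq \mu_{\bf F}^{2}|{\bf x}|^{2}$, and $|(\mathrm{Hess}\,V){\bf x}|^{2}\geq \mu_{V}^{2}|{\bf x}|^{2}$ (the latter valid because $\mathrm{Hess}\,V$ is symmetric and positive semidefinite at the origin, so its smallest eigenvalue equals its smallest singular value) yields
\[
\langle \mathrm{Hess}\,\dot V\cdot{\bf x},{\bf x}\rangle \leq (\lambda_{\bf u}^{2} - \mu_{V}^{2} - \mu_{\bf F}^{2})|{\bf x}|^{2},
\]
and the hypothesis $\lambda_{\bf u}^{2} - \mu_{V}^{2} < \mu_{\bf F}^{2}$ makes this strictly negative.

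The main subtlety, and the step I would double-check most carefully, is the algebraic collapse producing the clean formula for $\mathrm{Hess}\,\dot V$; the cross terms ${}^{t}(D{\bf u}_{\bf 0})(\mathrm{Hess}\,V)$ and $(\mathrm{Hess}\,V)D{\bf u}_{\bf 0}$ are not symmetric individually, and only their sum is, so one must be careful about which symmetric combination the squared singular values bound. A secondary point worth noting is the need to justify that the smallest eigenvalue of $\mathrm{Hess}\,V$ (which could a priori be zero at a degenerate minimum) is nonnegative, so that $\mu_{V}^{2}$ genuinely lower-bounds $|(\mathrm{Hess}\,V){\bf x}|^{2}/|{\bf x}|^{2}$; this follows from $V$ having a minimum at the origin. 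With the Hessian bound in hand, the conclusion that $\dot V < 0$ in a deleted neighborhood of the origin is a standard consequence of the second-derivative test applied at the critical point ${\bf 0}$ of $\dot V$.
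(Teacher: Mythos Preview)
Your proposal is correct and follows essentially the same approach as the paper: both reduce to proving negative definiteness of ${\rm Hess}\,\dot V$ at the origin via Proposition~\ref{min_p}, arrive at the identical key identity
\[
\left\langle ({\rm Hess}\,\dot V)\,{\bf x},{\bf x}\right\rangle = -\|({\rm Hess}\,V)\,{\bf x}\|^{2} + \|D{\bf u}_{\bf 0}\,{\bf x}\|^{2} - \|D{\bf F}_{\bf 0}\,{\bf x}\|^{2},
\]
and then bound each term by the relevant singular value or eigenvalue. The only cosmetic difference is that you obtain this identity by expanding the matrix product ${}^{t}(D{\bf F}_{\bf 0})\,D{\bf F}_{\bf 0}$ with $D{\bf F}_{\bf 0}=D{\bf u}_{\bf 0}-{\rm Hess}\,V$, whereas the paper works at the level of the quadratic form and uses the polarization-type rewrite $2\langle D{\bf u}_{\bf 0}{\bf x},({\rm Hess}\,V){\bf x}\rangle = \|D{\bf u}_{\bf 0}{\bf x}\|^{2}+\|({\rm Hess}\,V){\bf x}\|^{2}-\|(D{\bf u}_{\bf 0}-{\rm Hess}\,V){\bf x}\|^{2}$; these are the same computation in different clothing.
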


\begin{proof}
 The proof is by direct calculation. Using the representation obtained in Proposition \ref{min_p},  we have
\begin{align*}
\left<  ({\rm Hess}\, \dot V)  {\bf x}, {\bf x} \right> &=\left<\left( \,^t D {\bf F}_{\bf 0} \left({\rm Hess} \, V \right) + \left({\rm Hess} \, V \right) D {\bf F}_{\bf 0} \right){\bf x},{\bf x}\right>\\
			&=-2 || ({\rm Hess}\,  V)  {\bf x}||^2 + 2 \left< D {\bf u}_{\bf 0} {\bf x}, \left({\rm Hess} \, V \right) {\bf x} \right>,\\
\intertext{since $D {\bf F}_{\bf 0} = D {\bf u}_{\bf 0} - {\rm Hess}\,  V$. Rewriting the inner product of the second term in the last line, we have}
			\left<  ({\rm Hess}\, \dot V)  {\bf x}, {\bf x} \right>&=- || ({\rm Hess} \,  V)  {\bf x}||^2 + ||D {\bf u}_{\bf 0} {\bf x} ||^2 - ||\left(D {\bf u}_{\bf 0}-{\rm Hess} \, V \right){\bf x}||^2. \\
\intertext{Estimating each term by singular values or eigenvalues, we obtain}
			\left<  ({\rm Hess}\, \dot V)  {\bf x}, {\bf x} \right> &=-|| ({\rm Hess} \,  V)  {\bf x}||^2 +
||D {\bf u}_{\bf 0} {\bf x} ||^2-||D {\bf F}_{\bf 0} {\bf x} ||^2\\
		&\leq \left( \lambda_{\bf u} ^2 - \mu_{\bf F} ^2 - \mu_{V}^2 \right) ||{\bf x}||^2.
\end{align*}
Therefore, $({\rm Hess}\, \dot V)$ is negative definite. Combining this with Proposition \ref{min_p}, we conclude that  ${\dot V}$ assumes a maximum value $0$ at the origin and $V$ is a Lyapunov function.
\end{proof}
Using Main Theorem \ref{opti_thm}, a Lyapunov function may be constructed by seeking a decomposition that satisfies $ \lambda _{\bf u} ^2 - \mu_V ^2 < \mu _{\bf F} ^2$, even though a solution does not necessarily exist. In particular, this method is not applicable to vector fields with degenerate Jacobian matrix $D {\bf F}_{\bf 0} $.

For optimization, a harmonic function $h$ may be constructed so that the condition $ \lambda _{{\bf u} + \nabla h} ^2 - \mu_{V+h} ^2 < \mu _{\bf F} ^2$ is satisfied. This could be carried out by, for example, considering harmonic polynomials or harmonic functions constructed using the Poisson kernel representation. As only the quadratic terms of harmonic functions are relevant to the calculation of $\lambda _{{\bf u} + \nabla h}$ and $\mu_{V+h}$, it will suffice to consider the addition of quadratic harmonic polynomials. However, the implementation of such a procedure requires further investigation.

\begin{Ex}\label{Lya_ex}
\normalfont
A simple example of the use of Main Theorem \ref{opti_thm} is now provided. Let the vector field ${\bf F}$ on ${\mathbb R}^2$ be given by
$$ {\bf F}(x,y) = \left(\begin{array}{cc}
									 	-a & b\\
										c & - b
									\end{array}
								\right) \left( \begin{array}{c} x \\ y \end{array} \right).$$
with $ a  > b > c >0$. Then, there is an HHD of ${\bf F} = - \nabla V  + {\bf u}$ given by
\begin{eqnarray*}
V &=& \frac{a}{2} x^2 + \frac{b}{2} y^2,\\
{\bf u} &=& \left(\begin{array}{c}
				b y \\
				c x \end{array} \right).
\end{eqnarray*}
This decomposition is obtained by selecting the ``diagonal terms'' of the vector field as $- \nabla V$ and the remaining terms as ${\bf u}$.
It is now shown that $V$ is a Lyapunov function of the origin. This is easily verified by direct calculation as well. As $\lambda_{\bf u} = \mu_V = b$, it suffices to show that $\mu_{\bf F}^2 > 0$. This is equivalent to the matrix $\, ^ t \left( D {\bf F}_{\bf 0} \right) \left( D {\bf F}_{\bf 0} \right)$ being regular, which is obviously true because the coefficient matrix is regular. By Main Theorem \ref{opti_thm}, we conclude that $V$ is a Lyapunov function of the origin.

This analysis can be extended to vector fields involving higher-order terms with the same linear part if they have a decomposition with the lower-order terms of the potential function and the divergence-free vector field in the form given here. For example, let a vector field ${\bf F}$ be given by
$$
 {\bf F} = \left(\begin{array}{cc}
									 	-a & b\\
										c & - b
									\end{array}
								\right) {\bf x}
-\nabla p
+\left(\begin{array}{c}
									 	q\\
										r
									\end{array}
								\right),
$$
where $p,q,r$ are homogeneous polynomials with ${\rm deg} (p) > 2$, ${\rm deg} (q), {\rm deg} (r) > 1$, and $\frac{\partial q}{\partial x} + \frac{\partial r}{\partial y} = 0$. Then, $V = \frac{a}{2} x^2 + \frac{b}{2} y^2 + p(x,y)$ is a Lyapunov function of the origin.
\end{Ex}

\begin{Ex}
\normalfont
Another example of the use of Main Theorem \ref{opti_thm} is now presented to illustrate the features of the proposed method, compared to other schemes. The following ordinary differential equation is considered:
\begin{equation}\label{Giesl_ex}
\frac{d}{dt}\left(\begin{array}{c}
										x\\
										y
									\end{array}
								\right)
=\left(\begin{array}{c}
										x\left(-1 + 4 x^2 + \frac{1}{4} y^2 \right) + \frac{1}{8}y^3\\
										y\left(-1+\frac{5}{2}x^2 + \frac{3}{8}y^2 \right)-6 x^3
									\end{array}
								\right).
\end{equation}
This equation was analyzed as an example of numerical methods for constructing Lyapunov functions in \cite{giesl2007construction}. Samples of solution curves are given in the right panel of Figure \ref{giesl_lya}. As the origin is an asymptotically stable equilibrium point of the vector field, a local Lyapunov function of the origin may be constructed. For example, linear approximation of the vector field yields a local Lyapunov function $V_0 = \frac{1}{2}\left(x^2 + y^2\right)$, which is strictly decreasing on the disk $\{(x,y) \in {\mathbb R}^2 \mid V_0(x,y) \leq 0.09\}$. For the details of the construction via linear approximation, the reader is referred to \cite{giesl2007construction}.

Local Lyapunov functions are now constructed by applying Main Theorem \ref{opti_thm}. A potential function $V$ of an HHD for the vector field in the equation (\ref{Giesl_ex}) satisfies the following Poisson equation:
\begin{equation}\label{g_eq}
\Delta V = 2-\frac{29}{2}x^2-\frac{11}{8} y^2,
\end{equation}
which is obtained by taking the divergence in the definition of HHD. As $-\nabla V_0$ gives the linear part of the vector field, it is natural to seek a potential function $V$ with the terms of order less than two given by $V_0$, so that the linear approximation of $-\nabla V$ coincides with $-\nabla V_0$. If such $V$ is chosen, it takes minimum value at the origin. Furthermore, we have $D {\bf u}_{\bf 0} = 0$; therefore, $\lambda_{\bf u} = 0$ and the hypotheses of Main Theorem \ref{opti_thm} hold trivially.

As no cross term of $x$ and $y$ appears in Equation (\ref{g_eq}), an obvious solution is given by integrating each non-constant term twice, namely,
$$V_1(x,y) = V_0(x,y) - \frac{29}{24}x^4 -\frac{11}{96}y^4.$$
Contours of $V_1$ and the sign of $\dot{V_1}$ are shown in the left panel of Figure \ref{giesl_lya}. Compared with $V_0$, it succeeds in expanding the domain where solutions are determined to be divergent, whereas it fails to improve the estimate for the local attracting basin at the origin.

\begin{figure}[htp]
	\includegraphics[width=2.in]{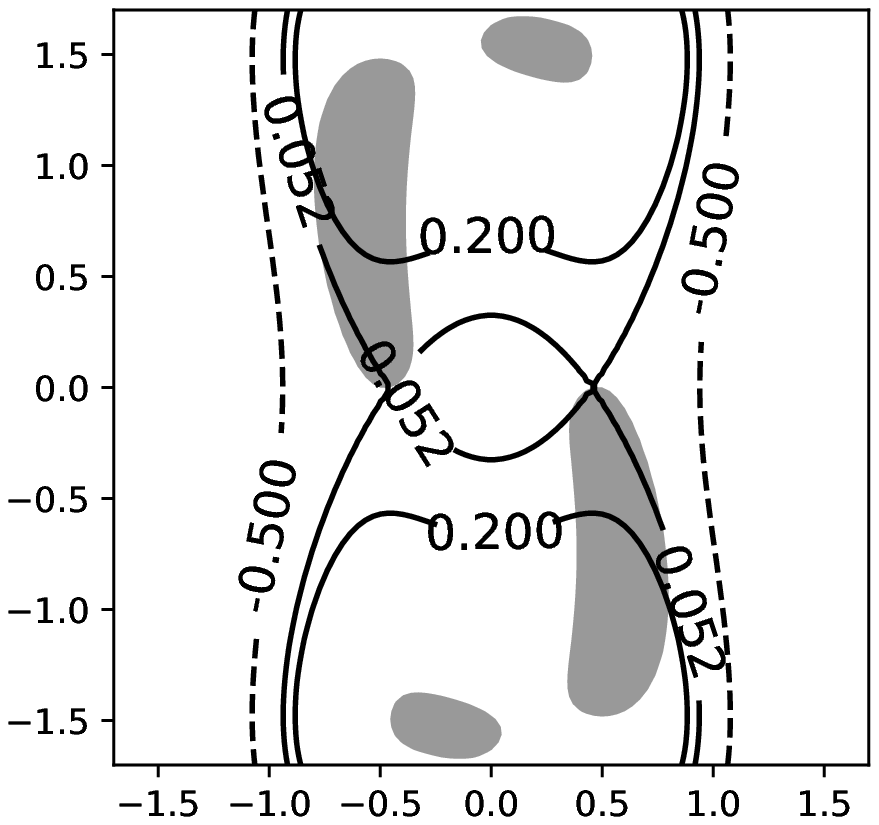}
	\includegraphics[width=2.in]{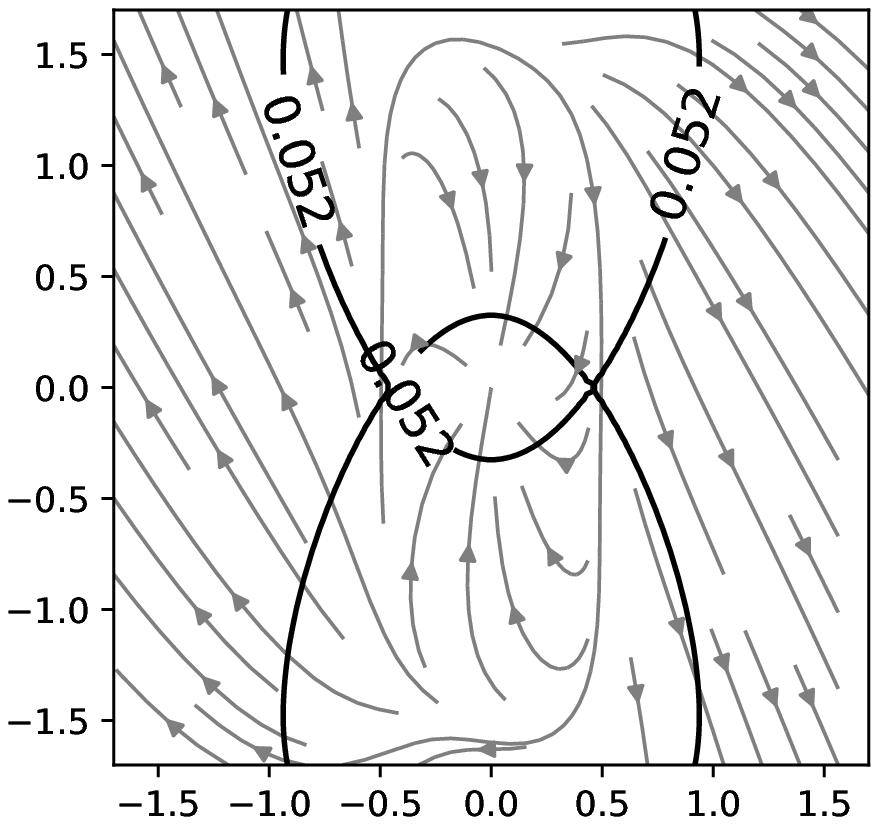}\\
		\caption{Left:  Contours of $V_1$ and the sign of $\dot{V_1}$. In the shaded domain, $\dot{V_1}$ is positive. Right: Solution curves of Equation (\ref{Giesl_ex}). A contour of $V_1$ is given for comparison with the left panel.}
\label{giesl_lya}
\end{figure}
 By retaking HHD, it may be possible to improve it. Another Lyapunov function is now constructed by adding harmonic polynomials to $V_1$. As the original vector field is roughly symmetric about the $y$ axis, the addition of, for example, the quartic harmonic functions of the form $ a \left( x^4 - 6x^2  y^2 +y^4\right)$ is considered, where $a$ is a constant. If $a=0.5$, we obtain a Lyapunov function $V_2(x,y) = V_1(x,y) + 0.5 \left( x^4 - 6x^2  y^2 +y^4\right)$. Contours of $V_1$ and the sign of $\dot{V_1}$ are shown in Figure \ref{giesl_lya_2}. It should be noted that the estimate for the local attracting basin at the origin is improved.

Only linear approximation is essentially required in the construction above. However, the resulting Lyapunov functions respect the nonlinear nature of the original equation, despite the ease of calculation. This is due to the fact that HHD reflects the nonlinearity of the original vector field. Furthermore, as numerous different Lyapunov functions may be obtained at a relatively low computational cost, it appears possible that a better picture of the local attracting basin could be obtained by comparing them. This points out another method for the computational analysis of the attracting basin.

\begin{figure}[htp]
		\includegraphics[width=2.in]{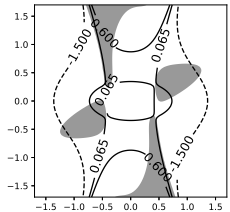}
		\caption{Contours of $V_2$ and the sign of $\dot{V_2}$. In the shaded domain, $\dot{V_2}$ is positive.}
\label{giesl_lya_2}
\end{figure}
\end{Ex}
\section{Strictly orthogonal HHD}
As was mentioned in Introduction, orthogonality in the $L^2$ sense is frequently imposed on HHD. Indeed, such a decomposition is obtained by imposing  as a boundary condition that the divergence-free vector field should be orthogonal to the normal vector of the boundary, and HHD with this boundary condition is unique \cite{Denaro_On_2003}. However, this is not sufficient if the aim is to completely analyze the behavior of vector fields. In particular, equilibrium points are not necessarily respected under this condition, and potential functions may have critical points irrelevant to the flow of the original vector field. These properties result in artifacts, as seen, for example, in \cite{Wiebel_Feature}.

In certain cases, a strictly orthogonal HHD may be obtained, which is sufficient for completely describing the behavior of the vector field.
	\begin{Def}
	The HHD ${\bf F} =- \nabla V+ {\bf u} $ is said to be {\bf strictly orthogonal} on $D \subset {\mathbb R}^n$ if ${\bf u}({\bf x}) \cdot  \nabla V({\bf x})= 0$ for all ${\bf x}\in D$.
	\end{Def}
If an HHD is strictly orthogonal, then it is orthogonal in the $L^2$ sense. For example, some vector fields given by relatively simple polynomials obviously have such a decomposition.
\begin{Ex}[Strictly orthogonal HHD]\label{ohhd_ex}
\normalfont
For $\mu >0 $ and $\omega < 0$, the following differential equation is considered:
	\begin{equation}\label{Hopf_eq}
\frac{d}{dt}\left(\begin{array}{c}
										x\\
										y
									\end{array}
								\right)
=\left(\begin{array}{c}
										\mu x -\omega y  - \left( x^2 + y^2\right)x\\
										\omega x + \mu y  - \left( x^2 + y^2\right)y
									\end{array}
								\right).
\end{equation}
The vector field on the right-hand side has a strictly orthogonal HHD, namely,
$$
V =  -\frac{\mu}{2} (x^2 +y^2) + \frac{1}{4} (x^2 +y^2)^2,
$$
$$
{\bf u} = \left(\begin{array}{c}
										-\omega y\\
										\omega x
									\end{array}
								\right).
$$
Solution curves of the vector field are illustrated in Figure \ref{Hopf_sol}. Solution curves of $-\nabla V$ and ${\bf u}$ given above are shown in Figure \ref{Hopf_HHD}.
\begin{figure}[htp]
\begin{center}
	\includegraphics[width=2.in]{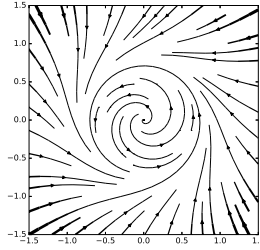}\\
	\caption{Solution curves of the vector field (\ref{Hopf_eq}). }
\end{center}
\label{Hopf_sol}\vspace*{-10pt}
\end{figure}
\begin{figure}[htp]
	\includegraphics[width=2.in]{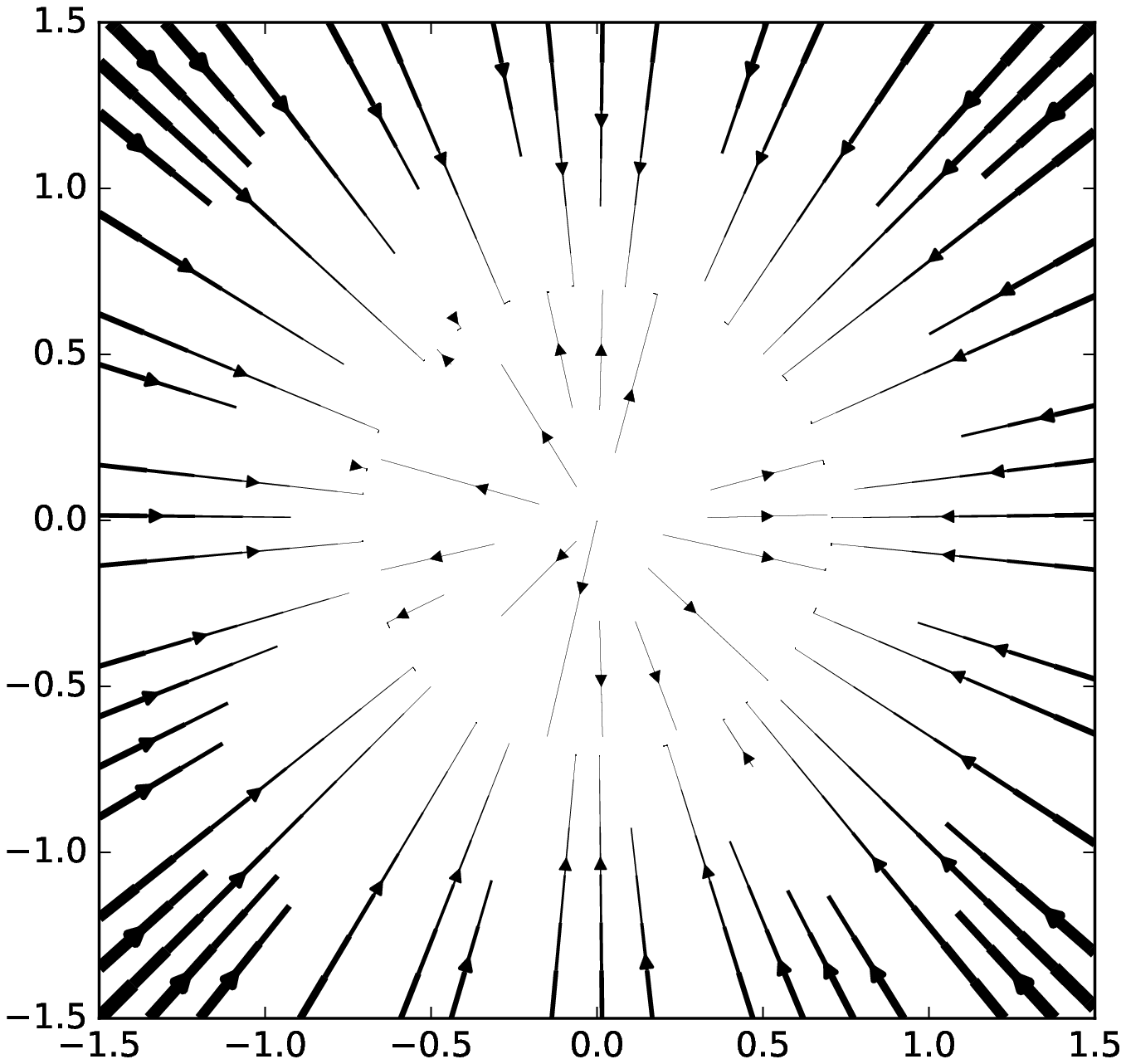}
	\includegraphics[width=2.in]{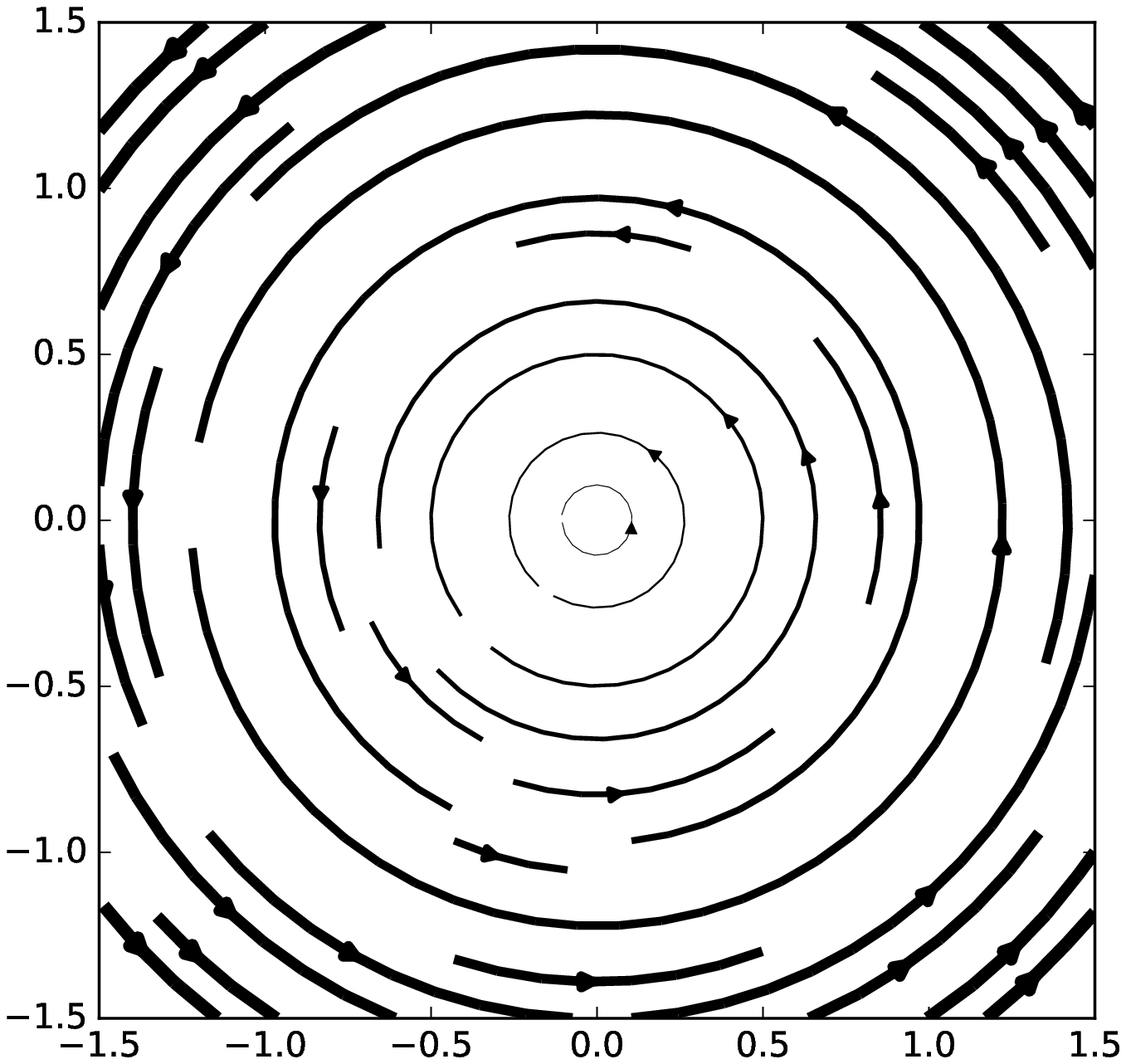}\\
		\caption{Strictly orthogonal HHD of the vector field (\ref{Hopf_eq}). Left: solution curves of $-\nabla V$. Right: solution curves of ${\bf u}$.}
\label{Hopf_HHD}
\end{figure}
\end{Ex}
For vector fields with strictly orthogonal HHD, there is a complete analogue to gradient systems, which are the extreme case of vector fields with Lyapunov functions. It is well known that the behavior of gradient vector fields is given in terms of potential functions \cite{ robinson1999dynamical}. The next theorem states that the same holds true if there is a strictly orthogonal HHD; therefore, vector fields with such a decomposition are a generalization of gradient vector fields.

\begin{Thm}[Main Theorem \ref{ohhd}]
	Let $D \subset {\mathbb R}^n$ be a bounded domain. If a vector field ${\bf F}:{\mathbb R}^n \to {\mathbb R}^n$ has a strictly orthogonal HHD on $\bar D$ with potential function $V$, then the following hold:
		\begin{enumerate}
		\item  If $D$ is forward invariant, then for all ${\bf x} \in D$, $$\omega({\bf x}) \subset \{{\bf y} \in {\bar D}\,| \,\nabla V({\bf y}) = {\bf 0}\}.$$
		\item If $D$ is backward invariant, then for all ${\bf x} \in D$,$$\alpha({\bf x}) \subset \{{\bf y} \in {\bar D}\,| \,\nabla V({\bf y}) = {\bf 0}\}.$$
		\end{enumerate}
	\end{Thm}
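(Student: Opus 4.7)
The plan is to exploit the identity that strict orthogonality forces on the orbital derivative of $V$, and then run a standard LaSalle-type argument. First I would compute
$$\dot V = \nabla V \cdot {\bf F} = \nabla V \cdot (-\nabla V + {\bf u}) = -|\nabla V|^2 + \nabla V \cdot {\bf u} = -|\nabla V|^2,$$
using strict orthogonality in the last step. Thus $V$ is a nonincreasing function along every solution that remains in $\bar D$, and $\dot V({\bf y})=0$ if and only if $\nabla V({\bf y}) = {\bf 0}$. This is the key structural fact driving both parts of the theorem.

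For part (1), fix ${\bf x}\in D$. Forward invariance of $D$ together with the compactness of $\bar D$ and continuity of $V$ give that $V({\bf x}(t))$ is nonincreasing and bounded below for $t\geq 0$, hence converges to some limit $L$ as $t\to\infty$. For any ${\bf y}\in\omega({\bf x})$, continuity of $V$ yields $V({\bf y})=L$, so $V$ is constant on $\omega({\bf x})$. Because $\bar D$ is compact and forward invariant, $\omega({\bf x})$ is a nonempty, compact, invariant subset of $\bar D$ under the flow of ${\bf F}$. Then for any ${\bf y}\in\omega({\bf x})$, the whole forward orbit $\varphi_t({\bf y})$ lies in $\omega({\bf x})$ and $V\circ\varphi_t({\bf y})\equiv L$, so differentiating gives $\dot V({\bf y})=0$, which by the identity above forces $\nabla V({\bf y})={\bf 0}$.

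Part (2) is entirely analogous with time reversed: under backward invariance of $D$, solutions starting in $D$ remain in $\bar D$ for $t\le 0$, and $V({\bf x}(t))$ is nondecreasing as $t$ decreases and bounded above, so it converges as $t\to-\infty$. The $\alpha$-limit set $\alpha({\bf x})$ is then nonempty, compact, invariant and contained in $\bar D$, and the same LaSalle argument applied on it yields $\nabla V\equiv {\bf 0}$ on $\alpha({\bf x})$.

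The only delicate points are bookkeeping rather than substance: one must justify that $\omega({\bf x})$ and $\alpha({\bf x})$ are nonempty and invariant under the flow (which follows from compactness of $\bar D$ and the appropriate invariance hypothesis on $D$, together with $C^2$-regularity of ${\bf F}$ giving a well-defined local flow near every point of $\bar D$), and that the limit sets sit in $\bar D$ rather than $D$ — this is why the conclusion is stated with $\bar D$. Everything else is a direct consequence of the defining identity $\dot V = -|\nabla V|^2$.
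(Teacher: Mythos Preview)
Your proof is correct and follows essentially the same approach as the paper: both derive the identity $\dot V = -|\nabla V|^2$ from strict orthogonality, use monotonicity plus compactness to show $V$ is constant on $\omega({\bf x})$, invoke invariance of $\omega({\bf x})$, and differentiate to conclude $\nabla V = {\bf 0}$ there. The paper packages the monotone-convergence step into a separate lemma and explicitly notes the analogy with LaSalle's invariance principle, but the substance of the argument is the same as yours.
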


 The proof of this theorem is similar to that of LaSalle's invariance principle \cite{lasalle}. The following lemma is used in the proof of Main Theorem \ref{ohhd}.
	\begin{Lem}\label{ohhd_lem}
	Let $D \subset {\mathbb R}^n$ be a bounded domain. It is assumed that a vector field ${\bf F}:{\mathbb R}^n \to {\mathbb R}^n$ has a strictly orthogonal HHD on $\bar D$ with potential function $V$ and $D$ is forward invariant. For each point ${\bf x}\in D$, let $V^*\left({\bf x}, \cdot \right): [0, \infty ) \rightarrow {\mathbb R}$ be defined by
	$$
	V^*\left({\bf x}, T \right) := \sup\{V({\bf y}) \mid {\bf y}\in \overline{ \cup_{T \leq t} \left \{\phi^t ({\bf x})\right \}}\}.
	$$
Then, the following hold:
	\begin{enumerate}
		\item $V^*\left({\bf x}, \cdot \right)$ is well-defined and given explicitly by $$V^*\left({\bf x}, T \right) = V\left( \phi^T({\bf x}) \right).$$ In particular, $V^*\left({\bf x}, \cdot \right)$ is monotonically decreasing.
		\item For all ${\bf y}\in \omega({\bf x})$, we have $$V({\bf y}) = \lim_{T \rightarrow \infty} V^*\left({\bf x}, T \right) .$$
	\end{enumerate}
	\end{Lem}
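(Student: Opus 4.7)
The plan is to exploit the decisive consequence of strict orthogonality: along any solution lying in $\bar D$, the orbital derivative of $V$ reduces to
$$\dot V = \nabla V \cdot {\bf F} = -|\nabla V|^2 + \nabla V \cdot {\bf u} = -|\nabla V|^2 \leq 0,$$
so $t \mapsto V(\phi^t({\bf x}))$ is monotonically nonincreasing. Everything else in the lemma will follow from this identity combined with standard continuity and compactness considerations on the forward-invariant bounded set $\bar D$.

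For part (1), I would first observe that since $D$ is forward invariant and bounded, the forward orbit $\{\phi^t({\bf x}) : t \geq T\}$ is contained in $\bar D$, which is compact. Because $V$ is $C^3$, it is continuous on $\bar D$, hence bounded there, so the supremum defining $V^*({\bf x},T)$ is finite; this gives well-definedness. Continuity of $V$ then lets me replace the supremum over the closure by the supremum over the orbit itself. The monotonicity $V(\phi^t({\bf x})) \leq V(\phi^T({\bf x}))$ for $t \geq T$, derived from $\dot V \leq 0$, shows the supremum is attained at $t = T$, yielding $V^*({\bf x},T) = V(\phi^T({\bf x}))$. Monotonicity of $V^*({\bf x},\cdot)$ in $T$ is then immediate from the monotonicity of $V \circ \phi^{(\cdot)}({\bf x})$.

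For part (2), since $V(\phi^T({\bf x}))$ is nonincreasing in $T$ and $V$ is bounded below on the compact set $\bar D$, the limit
$$V_\infty := \lim_{T\to\infty} V(\phi^T({\bf x})) = \lim_{T\to\infty} V^*({\bf x},T)$$
exists. Given ${\bf y} \in \omega({\bf x})$, pick a sequence $t_n \to \infty$ with $\phi^{t_n}({\bf x}) \to {\bf y}$; by continuity of $V$ we get $V({\bf y}) = \lim_n V(\phi^{t_n}({\bf x})) = V_\infty$, which is the desired identity.

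I do not anticipate a real obstacle here: the strict orthogonality identity turns the lemma into what is essentially the gradient-system version of LaSalle's argument, and the only point requiring any care is the equality of the suprema of $V$ over the orbit and its closure, which is an immediate consequence of continuity of $V$ on the compact set $\bar D$.
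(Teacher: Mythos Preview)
Your proposal is correct and follows essentially the same route as the paper: both derive $\dot V = -|\nabla V|^2 \leq 0$ from strict orthogonality, use continuity of $V$ on the compact set $\bar D$ to pass between the supremum over the orbit and over its closure, identify the supremum as $V(\phi^T({\bf x}))$ via monotonicity, and then use a standard sequence argument for part~(2). The paper spells out the closure step slightly more explicitly (picking a sequence ${\bf y}_n \to \tilde{\bf y}$ and passing to the limit), but there is no substantive difference.
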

		\begin{proof}
		(1) As $\dot{V} \leq 0$ for all $ {\bf y}\in \bigcup_{T \leq t} \left \{\phi^t ({\bf x})\right \}$, we have
		$$V({\bf y}) \leq V(\phi^T({\bf x})).$$
		For all $ \tilde{\bf y}\in \overline{\bigcup_{T \leq t} \left \{\phi^t ({\bf x})\right \}}$, there is a sequence ${\bf y}_{n}$ such that
		\begin{eqnarray*}
			\tilde{\bf y} &=& \lim_{n \to \infty}{\bf y}_{n},\\
			{\bf y}_{n} &\in& \bigcup_{T \leq t}\left \{\phi^t ({\bf x}) \right \}.
		\end{eqnarray*}
		The continuity of $V$ implies that
		\begin{eqnarray*}
			V(\tilde{\bf y}) &=& \lim_{n \to \infty} V({\bf y}_{n})\\
					&\leq& V\left(\phi^T({\bf x})\right).
		\end{eqnarray*}
		Therefore,
		$$V^*\left({\bf x}, T \right) \leq V(\phi^T({\bf x})).$$
		Moreover,
		$$ \phi^T({\bf x})\in \overline{ \bigcup_{T \leq t} \left \{\phi^t ({\bf x})\right \}};$$
		thus,
		$V^*\left({\bf x}, T \right) \geq V(\phi^T({\bf x}))$. Therefore, $V^*\left({\bf x}, T \right) = V\left( \phi^T({\bf x}) \right)$. Monotonicity is obvious.
		\\
		(2) The compactness of $\bar D$ implies that $V$ is bounded on $D$. Therefore, the monotonic function $V^*\left({\bf x}, \cdot \right)$ is also bounded. Thus, the quantity
		$$V^*\left({\bf x}, \infty \right) := \lim_{T \to \infty} V^*\left({\bf x}, T \right)$$
		is well-defined. If ${\bf y} \in \omega({\bf x})$, there exists a sequence $t_n$ such that
		\begin{eqnarray*}
			{\bf y} &=& \lim_{n \to \infty}\phi^{t_{n}}({\bf x}),\\
			\lim_{n \to \infty}t_n &=& \infty.
			\end{eqnarray*}
	Using the continuity of $V$ and (1), we conclude that
	\begin{eqnarray*}
		V({\bf y}) &=& \lim_{n \to \infty} V(\phi^{t_n}({\bf x}))\\
				&=& \lim_{n \to \infty}V^*\left({\bf x}, t_n \right)\\
				&=& V^*\left({\bf x}, \infty \right).
	\end{eqnarray*}
	Therefore, $V$ is constant on $\omega({\bf x})$ and equal to $V^*\left({\bf x}, \infty \right)$.
	\end{proof}
	 Main Theorem \ref{ohhd} is proved below.
	\begin{proof}
	
	The results for $\alpha$-limit sets follow from those for $\omega$-limit sets. Thus, only the latter case is proved here.
	
	Let ${\bf y} \in \omega({\bf x})$ be fixed. As $\omega({\bf x})$ is invariant for all $t \in {\mathbb R}$, we have
	$$\phi^t({\bf y}) \in \omega({\bf x}).$$
	By Lemma \ref{ohhd_lem}, we have for all $t \in {\mathbb R}$,
	$$V(\phi^t({\bf y})) = V^*\left({\bf x}, \infty \right).$$
	As
	\begin{eqnarray*}
		0 &=& \left.\frac{d}{d t} V^*\left({\bf x}, \infty \right)\right|_{t=0}\\
			&=& \left.\frac{d}{d t} V(\phi^t({\bf y}))\right|_{t=0}\\
			&=&\dot V ({\bf y})\\
			&=& - |\nabla V({\bf y})|^2,
	\end{eqnarray*}
	we conclude that $\nabla V({\bf y}) = {\bf 0}$. Therefore, $\omega({\bf x}) \subset \{{\bf y} \in {\bar D}\mid \nabla V({\bf y}) = {\bf 0}\}$.
	\end{proof}
\begin{Rem}
The method of Demongeot, Glade, and Forest is essentially an approximation by polynomial vector fields with a strictly orthogonal  HHD\cite{Demongeot_Li_2007_2}.
\end{Rem}
 The main limitation of Main Theorem \ref{ohhd} is the difficulty in finding a strictly orthogonal HHD. In general, its existence cannot be ensured. Furthermore, this theorem implies that the limit sets for a vector field with a strictly orthogonal HHD are restricted in smooth manifolds, which is not plausible for higher dimensional vector fields.

Nevertheless, if a strictly orthogonal HHD can be obtained, the behavior of the vector field can be rigorously determined, as the next example illustrates.
\begin{Ex}
\normalfont
As an application of Main Theorem \ref{ohhd}, it is shown that the vector field in Example \ref{ohhd_ex} has a limit cycle. As is easily verified, the vector field has a single equilibrium point at the origin. The set $S = \{{\bf y} \in {\mathbb R}^2 \mid \nabla V({\bf y}) = {\bf 0}\}$ is the union of the origin and the circle $C= \{{\bf y} \in {\mathbb R}^2 \mid {\bf y}\cdot {\bf y} = \mu \}$. By Main Theorem \ref{ohhd}, $\omega({\bf x}) \subset S$ for all ${\bf x} \in {\mathbb R}^2$. As $\omega({\bf x})$ is connected, we either have $\omega({\bf x}) =\{{\bf  0}\}$ or $\omega({\bf x})\subset C$. If ${\bf x} \neq {\bf 0}$, the former is impossible because the origin is unstable. Thus, the only possibility is that $\omega({\bf x})\subset C$. By the Poincar\'e-Bendixon theorem, $\omega({\bf x})$ is a periodic orbit; therefore, $\omega({\bf x}) = C$.
\end{Ex}
The existence of a strictly orthogonal HHD is compatible with a boundary condition commonly used if the domain is properly chosen, even though boundary conditions alone are not sufficient.
		\begin{Lem}
	Let ${\bf F}$ be a vector field defined on ${\mathbb R}^2$. Let $D \subset {\mathbb R}^2$ be a domain and $\partial D$ be a piecewise $C^1$ Jordan curve. It is assumed that all ${\bf x}\in \partial D$ are equilibrium points. If there is a strictly orthogonal HHD of ${\bf F}$ on $\bar D$, it satisfies the following on $\partial D$:
	$${\bf u}\cdot{\bf n} = 0,$$
			where ${\bf n}$ is normal vector of $\partial D$ pointing outward.
	\end{Lem}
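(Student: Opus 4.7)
The plan is to exploit the equilibrium hypothesis on $\partial D$ to eliminate ${\bf F}$ from the HHD, and then let strict orthogonality force ${\bf u}$ itself to vanish on $\partial D$. The argument is essentially pointwise and algebraic; nothing genuinely two-dimensional is needed to derive the identity, only to make sense of the outward normal.

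First I would note that, by hypothesis, ${\bf F}({\bf x}) = {\bf 0}$ for every ${\bf x} \in \partial D$. The decomposition ${\bf F} = -\nabla V + {\bf u}$ then gives the pointwise identity ${\bf u}({\bf x}) = \nabla V({\bf x})$ on $\partial D$. At the same time, strict orthogonality of the HHD on $\bar D$ supplies ${\bf u}({\bf x}) \cdot \nabla V({\bf x}) = 0$ at every such point. Substituting the first identity into the second yields $|{\bf u}({\bf x})|^2 = 0$, so in fact ${\bf u} \equiv {\bf 0}$ on $\partial D$. In particular ${\bf u} \cdot {\bf n} = 0$, which is strictly stronger than the stated claim.

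There is no real obstacle to overcome here; the only thing worth flagging is a consistency check between the \emph{Definition} of strict orthogonality, which is formulated on the open domain $D$, and the present hypothesis, which is stated on $\bar D$. Since both ${\bf u}$ and $\nabla V$ inherit continuity up to the boundary from the regularity of ${\bf F}$, the condition ${\bf u} \cdot \nabla V = 0$ passes automatically between $D$ and $\bar D$, so the hypothesis is unambiguous. The planarity and Jordan-curve assumptions on $\partial D$ play no role in the algebra; they are required only so that the outward normal ${\bf n}$ is well-defined piecewise along $\partial D$, and hence so that the conclusion has a clear meaning. The proof is therefore essentially a one-line substitution, in line with the paragraph following the lemma which emphasises that such a boundary condition is \emph{necessary} but not \emph{sufficient} for strict orthogonality.
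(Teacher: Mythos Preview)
Your proposal is correct and follows the same approach as the paper: both show that ${\bf u}$ actually vanishes identically on $\partial D$, whence ${\bf u}\cdot{\bf n}=0$. In fact your write-up is more explicit than the paper's, which simply asserts ``${\bf u}({\bf y})={\bf F}({\bf y})={\bf 0}$ on $\partial D$'' without spelling out the substitution ${\bf u}=\nabla V$ into the strict-orthogonality condition that you supply.
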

		\begin{proof}
		We have ${\bf u}({\bf y}) ={\bf F}({\bf y}) = {\bf 0}$ on $\partial D$. Therefore,  ${\bf u}\cdot{\bf n} = 0$.
	\end{proof}
\begin{Lem}\label{bc_2_th}
Let ${\bf F}$ be a vector field defined on ${\mathbb R}^2$. Let $D \subset {\mathbb R}^2$ be a domain and $\partial D$ be a piecewise $C^1$ Jordan curve. It is assumed that there is a point ${\bf x}\in D$ such that $\omega({\bf x}) = \partial D$. If there is a strictly orthogonal HHD of ${\bf F}$ on $\bar D$, it satisfies the following on $\partial D$:
	$${\bf u}\cdot{\bf n} = 0,$$
			where ${\bf n}$ is normal vector of $\partial D$ pointing outward.
	\end{Lem}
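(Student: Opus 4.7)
The plan is to combine the invariance of $\omega$-limit sets with the monotonicity of $V$ along orbits to show that $\nabla V$ vanishes on $\partial D$, and then exploit the invariance of $\partial D$ itself to conclude that ${\bf F}$ is tangent to $\partial D$.

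First, since $\omega$-limit sets are always invariant under the flow, the Jordan curve $\partial D = \omega({\bf x})$ is invariant under $\phi^t$. By uniqueness of solutions, no trajectory originating at a point of $D$ can ever reach $\partial D$: if $\phi^{t_{0}}({\bf x}) \in \partial D$ for some $t_0 > 0$, then invariance of $\partial D$ combined with uniqueness would force the whole orbit of ${\bf x}$, and in particular ${\bf x}$ itself, to lie on $\partial D$. Hence $\phi^t({\bf x}) \in D$ for all $t \geq 0$, and the strictly orthogonal HHD on $\bar D$ gives $\dot V = -|\nabla V|^2 \leq 0$ along this orbit. The reasoning of Lemma \ref{ohhd_lem} then applies verbatim to the single orbit through ${\bf x}$: $V(\phi^t({\bf x}))$ is monotonically non-increasing and bounded on the compact set $\bar D$, hence converges to a limit $V^*({\bf x},\infty)$, and continuity of $V$ forces $V \equiv V^*({\bf x},\infty)$ on $\omega({\bf x}) = \partial D$.

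Next, fix any ${\bf y} \in \partial D$. Invariance of $\partial D$ gives $\phi^t({\bf y}) \in \partial D$ for all $t \in {\mathbb R}$, and by the previous step $V \circ \phi^t({\bf y})$ is identically constant. Differentiating at $t = 0$ yields $\dot V ({\bf y}) = 0$, and strict orthogonality reduces $\dot V = \nabla V \cdot {\bf F} = \nabla V \cdot (-\nabla V + {\bf u})$ to $-|\nabla V|^2$, so $\nabla V({\bf y}) = {\bf 0}$. Therefore ${\bf u} = {\bf F} + \nabla V = {\bf F}$ everywhere on $\partial D$.

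Finally, invariance of $\partial D$ together with uniqueness of solutions forces ${\bf F}({\bf y})$ to be tangent to $\partial D$ at every smooth point ${\bf y}$: either ${\bf F}({\bf y}) = {\bf 0}$, or the orbit through ${\bf y}$ lies locally on $\partial D$ and its velocity ${\bf F}({\bf y})$ must be tangent to that $C^1$ arc. Hence ${\bf F} \cdot {\bf n} = 0$ on $\partial D$, and combining with ${\bf u} = {\bf F}$ there gives ${\bf u} \cdot {\bf n} = 0$. The only subtle step is the first one, where forward invariance must be extracted for the orbit through ${\bf x}$ (since $D$ itself is not assumed forward invariant) out of the invariance of $\omega({\bf x})$; the remaining steps are elementary.
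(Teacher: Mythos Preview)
Your proof is correct and follows the same route as the paper: show $\nabla V = {\bf 0}$ on $\partial D$ via the argument behind Main Theorem~\ref{ohhd} and Lemma~\ref{ohhd_lem}, deduce ${\bf u} = {\bf F}$ there, and use invariance of $\partial D$ to get tangency. The paper's proof is a three-line appeal to Main Theorem~\ref{ohhd}; you in fact supply a detail the paper leaves implicit, namely that the forward orbit of ${\bf x}$ remains in $D$ (needed since the Lemma does not assume $D$ is forward invariant, while Main Theorem~\ref{ohhd} does).
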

		\begin{proof}
		Main Theorem \ref{ohhd} implies that $\nabla V = {\bf 0}$ on $\partial D$. Therefore, ${\bf u}({\bf y}) ={\bf F}({\bf y})$ on $\partial D$ and ${\bf u}$ is tangent to $\partial D$. Thus, ${\bf u}\cdot{\bf n} = 0$.
	\end{proof}
\noindent Under the assumptions in Theorem \ref{bc_2_th}, a strictly orthogonal HHD is unique.
		\begin{Cor}\label{Unique_so}
	Let ${\bf F}$ be a vector field defined on ${\mathbb R}^2$. Let $D \subset {\mathbb R}^2$ be a domain and $\partial D$ be a partially $C^1$ Jordan closed curve. It is assumed that there is a point ${\bf x}\in D$ such that $\omega({\bf x}) = \partial D$. Then, a strictly orthogonal HHD is unique if it exists.
	\end{Cor}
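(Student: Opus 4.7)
The approach is to take the difference of two strictly orthogonal HHDs and show, via Main Theorem \ref{ohhd} and the maximum principle for harmonic functions, that this difference is trivial.

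Suppose ${\bf F} = -\nabla V_1 + {\bf u}_1 = -\nabla V_2 + {\bf u}_2$ are two strictly orthogonal HHDs on $\bar D$, and set $h := V_1 - V_2$. Then $\nabla h = {\bf u}_1 - {\bf u}_2$, and taking the divergence of this identity gives $\Delta h = 0$ on $D$; hence $h$ is harmonic, with $\nabla h$ extending continuously to $\bar D$ by the smoothness assumed in the definition of HHD.

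The crucial step is to obtain boundary information for $\nabla h$. Applying part (1) of Main Theorem \ref{ohhd} to each HHD separately and using the hypothesis $\omega({\bf x}) = \partial D$, one gets $\partial D \subset \{\nabla V_i = {\bf 0}\}$ for $i=1,2$, so $\nabla h \equiv {\bf 0}$ on $\partial D$. In particular the tangential derivative of $h$ along the connected Jordan curve $\partial D$ vanishes, so $h$ is constant on $\partial D$. Since $h$ is harmonic on the bounded domain $D$ and continuous up to $\bar D$ with constant boundary values, the maximum principle forces $h$ to be constant on $\bar D$. Consequently $\nabla h \equiv {\bf 0}$, so $\nabla V_1 = \nabla V_2$ and ${\bf u}_1 = {\bf u}_2$, which is the claimed uniqueness of the decomposition.

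I expect the only delicate point to be essentially a bookkeeping one, namely checking compatibility between the corollary's hypotheses and the hypothesis needed by Main Theorem \ref{ohhd}: the latter requires $D$ to be forward invariant, and this needs to be extracted from the assumption $\omega({\bf x}) = \partial D$ (for instance via the observation that the forward orbit of ${\bf x}$ cannot cross $\partial D$ once it accumulates there, so it must remain in $\bar D$). Beyond that, the ingredients---boundary vanishing of $\nabla V$ from Main Theorem \ref{ohhd}, connectedness of a Jordan curve, and the maximum principle for harmonic functions on a bounded planar domain with piecewise $C^1$ boundary---are classical.
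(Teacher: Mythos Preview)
Your argument is correct, but the paper proceeds differently. Both routes start from the same key fact---Main Theorem~\ref{ohhd} forces $\nabla V = {\bf 0}$ on $\partial D$ whenever $\omega({\bf x}) = \partial D$---but the paper packages this as Lemma~\ref{bc_2_th}, which concludes only the Neumann-type boundary condition ${\bf u}\cdot{\bf n} = 0$ on $\partial D$, and then invokes the uniqueness of HHD under that boundary condition as a black box from the literature (Denaro~\cite{Denaro_On_2003}). Your argument is more self-contained: you use the full strength of $\nabla V_i = {\bf 0}$ on $\partial D$ to deduce that the harmonic difference $h = V_1 - V_2$ has constant boundary values, and then apply the maximum principle directly. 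What the paper's route buys is brevity and a connection to the standard $L^2$-orthogonal theory; what yours buys is that no external uniqueness theorem is needed, and the argument makes the role of harmonicity and the Jordan-curve boundary transparent. Your caveat about forward invariance is well taken and applies equally to the paper's use of Lemma~\ref{bc_2_th}; it is resolved by noting that $\omega({\bf x}) = \partial D$ makes $\partial D$ an invariant closed curve, so by uniqueness of solutions no trajectory crosses it and $D$ is forward invariant.
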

		\begin{proof}
		The HHD that satisfies ${\bf u}\cdot{\bf n} = 0$ on $\partial D$ is unique (a proof may be found in \cite{Denaro_On_2003}).
	\end{proof}
Uniqueness is useful for constructing strictly orthogonal HHD. In Corollary \ref{Unique_so}, the domain $D$ under consideration is determined in terms of the dynamics of the original vector field and there is no need for optimization. If a strictly orthogonal HHD exists, it will be constructed by applying the boundary condition ${\bf u}\cdot{\bf n} = 0$ on $\partial D$, owing to uniqueness.

\section{Analysis of planar vector fields}\label{planar}
In this section, to study the limitations of the analysis based on the potential functions of HHDs, planar vector fields are considered. In particular, the effect of boundary conditions on the properties of potential functions are studied. The results obtained here indicate that if the derivative of the vector field is degenerate at the equilibrium point, a Lyapunov function cannot be easily constructed by choosing a potential function.

For planar vector fields with isolated equilibrium points, the Poisson equation can be explicitly solved in neighborhoods of such points, and a formula for representing the derivatives of the potential function of the HHD can be obtained in terms of the Fourier coefficients of the boundary condition. Here,  the following property is used: the addition of a harmonic function to the potential function of an HHD yields another HHD. For simplicity, in what follows, it is assumed that the vector field has an equilibrium point at the origin. The Poisson kernel and its orbital derivative are first defined.
\begin{Def}
The Poisson kernel on the unit disk is defined by
$$K (x,y,\theta):= \frac{1-x^2-y^2}{(x-\cos(\theta))^2 + (y-\sin(\theta))^2} ,$$
where $\theta \in [0, 2 \pi)$ is a parameter.
For a vector field ${\bf F} := (f_1,f_2)$ on the unit disk, let
$$L[{\bf F}](x,y,\theta):= f_1\frac{\partial K}{\partial x}+ f_2\frac{\partial K}{\partial y}.$$
\end{Def}
In terms of the vector field, $L[{\bf F}]$ is given explicitly as follows.
\begin{Lem}
For a vector field ${\bf F}$ on the unit disk, $L[{\bf F}](x,y,\theta)$ is explicitly given by
 $$ \frac{2 \left( -(1+K){\bf x}+K{\bf e}_{\theta}\right) }{(x-\cos(\theta))^2+ (y-\sin(\theta))^2} \cdot {\bf F} ,$$
where ${\bf x} = (x,y)$ and ${\bf e}_{\theta} = \left(\cos(\theta), \sin(\theta) \right)$.
\end{Lem}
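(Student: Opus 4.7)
The plan is to compute the two partial derivatives $\partial K/\partial x$ and $\partial K/\partial y$ directly from the definition, then rewrite them in the claimed form by exploiting a small algebraic identity. The structure of the target expression already indicates what to aim for: the factor $-(1+K){\bf x} + K{\bf e}_{\theta}$ tells us we want to group terms after differentiation so that $1+K$ multiplies ${\bf x}$ and $K$ multiplies ${\bf e}_{\theta}$.

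First, I would set $N := 1 - x^2 - y^2$ and $D := (x-\cos\theta)^2 + (y-\sin\theta)^2$, so that $K = N/D$. The quotient rule gives
$$\frac{\partial K}{\partial x} = \frac{-2xD - 2N(x-\cos\theta)}{D^2},$$
and analogously for $\partial K/\partial y$ with $x$ replaced by $y$ and $\cos\theta$ replaced by $\sin\theta$. The pivotal identity is $N+D = 2(1 - x\cos\theta - y\sin\theta)$, which follows immediately from expanding $D$: the $x^2$ and $y^2$ terms cancel against those in $N$, leaving only the linear terms together with $1 + \cos^2\theta + \sin^2\theta = 2$. Dividing by $D$ this reads $1+K = (N+D)/D$, which is the form I will actually use.

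Next, using $N = KD$ together with the identity above, the numerator of $\partial K/\partial x$ rearranges as
$$-2xD - 2N(x-\cos\theta) = -2x(D+N) + 2N\cos\theta = 2D\bigl[-(1+K)x + K\cos\theta\bigr],$$
so that $\partial K/\partial x = 2(-(1+K)x + K\cos\theta)/D$, with the analogous formula for $\partial K/\partial y$. Substituting into $L[{\bf F}] = f_1\,\partial_x K + f_2\,\partial_y K$ and collecting the two components into a dot product with ${\bf F}$ yields the claimed expression.

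I do not anticipate any real obstacle: the statement is just an algebraic rewriting of a routine quotient-rule computation. The only step requiring a small amount of insight is noticing the identity $N+D = 2(1 - x\cos\theta - y\sin\theta)$, which is the bridge that converts the raw derivatives into the desired $(1+K, K)$-coefficient pattern; everything else is mechanical bookkeeping.
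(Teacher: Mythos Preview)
Your proposal is correct; the computation is carried out cleanly and the key identity $N+D = 2(1 - x\cos\theta - y\sin\theta)$ is exactly what makes the $(1+K,K)$ pattern appear. The paper states this lemma without proof, treating it as a direct calculation, so your argument supplies precisely the details the paper omits.
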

Representing a harmonic function on the unit disk using the Poisson kernel, we obtain the following result by interchanging differentiation and integration.
\begin{Lem}\label{rep_h}
If a harmonic function on the unit disk is given by
$$ h(x,y) = \frac{1}{ 2 \pi } \int_0^{2 \pi}\alpha(\theta) K(x,y,\theta) d \theta,$$
where $\alpha$ is a bounded function on the unit circle $S^1$,
then
$$ \dot{h}(x,y)= \frac{1}{ 2 \pi } \int_0^{2 \pi}\alpha(\theta) L[{\bf F}](x,y,\theta) d \theta.$$
\end{Lem}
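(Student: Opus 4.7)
The plan is to reduce the lemma to differentiation under the integral sign, since once the gradient of $h$ can be expressed as an integral of gradients of $K$, the definition of the orbital derivative and of $L[{\bf F}]$ immediately deliver the formula.

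First I would fix an arbitrary point $(x,y)$ in the open unit disk. By the definition of the orbital derivative,
\[
\dot h (x,y) = f_1(x,y)\frac{\partial h}{\partial x}(x,y) + f_2(x,y)\frac{\partial h}{\partial y}(x,y).
\]
Applied to the integral representation of $h$, this suggests the identity
\[
\frac{\partial h}{\partial x}(x,y) = \frac{1}{2\pi}\int_0^{2\pi}\alpha(\theta)\frac{\partial K}{\partial x}(x,y,\theta)\,d\theta,
\]
and analogously for $\partial_y$. Inserting these into $\dot h$ and interchanging the finite sum $f_1\partial_x K + f_2\partial_y K$ with the integral then gives exactly $(2\pi)^{-1}\int_0^{2\pi}\alpha(\theta)L[{\bf F}](x,y,\theta)\,d\theta$.

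The only nontrivial point is justifying the differentiation under the integral sign, which I would handle by the standard dominated-convergence criterion. Fix a compact neighborhood $K_0$ of $(x,y)$ contained in the open unit disk. For $(x',y')\in K_0$ and $\theta\in[0,2\pi)$, the denominator $(x'-\cos\theta)^2+(y'-\sin\theta)^2$ stays bounded below by a positive constant, so $K$ and all of its partial derivatives in $(x',y')$ are continuous, hence bounded, on $K_0\times[0,2\pi)$. Since $\alpha$ is assumed bounded, the integrands $\alpha(\theta)\partial_x K$ and $\alpha(\theta)\partial_y K$ admit an integrable (indeed constant) bound on this neighborhood, which legitimizes the interchange.

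Once the two partial derivatives of $h$ are expressed as integrals, the conclusion is a one-line computation: multiplying the first identity by $f_1(x,y)$, the second by $f_2(x,y)$, adding them, and pulling the now $\theta$-independent factors $f_1,f_2$ inside the integral yields
\[
\dot h(x,y) = \frac{1}{2\pi}\int_0^{2\pi}\alpha(\theta)\Bigl(f_1\frac{\partial K}{\partial x}+f_2\frac{\partial K}{\partial y}\Bigr)(x,y,\theta)\,d\theta,
\]
which is precisely the claimed formula by the definition of $L[{\bf F}]$. I do not anticipate any real obstacle here; the lemma is essentially a bookkeeping statement, and the only care needed is ensuring that the interchange of $\nabla$ and $\int$ is justified on the open disk.
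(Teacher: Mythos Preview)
Your proof is correct and follows exactly the approach the paper indicates: the paper states only that the lemma is obtained ``by interchanging differentiation and integration,'' and your proposal carries this out in detail, supplying the dominated-convergence justification that the paper leaves implicit.
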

The addition of harmonic functions changes the choice of the decomposition. If an HHD is given by ${\bf F} = - \nabla V +{\bf u}$, the addition of a harmonic function $h$ yields another decomposition of the form ${\bf F} = - \nabla\left( V +h\right)+\left({\bf u}+\nabla h \right)$. Therefore, it suffices to determine a harmonic function $h$ so that $V+h$ is a Lyapunov function of the origin, where $V$ is the potential function of the initial decomposition. The aim here is to calculate the value of the partial differentials of $V+h$ and ${\dot V}+{\dot h}$ at the origin using  Lemma \ref{rep_h} and thereby establish that $V+h$ attains a minimum, whereas ${\dot V}+{\dot h}$ attains a maximum at the origin. The following result can be shown by direct calculation.
\begin{Lem}\label{rep_K}
The values of the partial differentials of $K$ at the origin are
$$\frac{\partial K}{\partial x}(0,0,\theta) = 2\cos(\theta).$$
$$\frac{\partial K}{\partial y}(0,0,\theta) = 2 \sin(\theta).$$
$$\frac{\partial^2 K}{\partial x^2}(0,0,\theta) = 4 \cos(2 \theta).$$
$$\frac{\partial^2 K}{\partial x \partial y}(0,0,\theta) =  4 \sin(2 \theta).$$
$$\frac{\partial^2 K}{\partial y^2}(0,0,\theta) = -4 \cos(2 \theta).$$

\end{Lem}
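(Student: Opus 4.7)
My plan is to bypass repeated quotient-rule differentiation by using the classical Fourier expansion of the Poisson kernel in polar form. Setting $x=r\cos\phi$, $y=r\sin\phi$, one has $(x-\cos\theta)^2+(y-\sin\theta)^2 = 1-2r\cos(\phi-\theta)+r^2$, so
$$K(x,y,\theta) = \frac{1-r^2}{1-2r\cos(\phi-\theta)+r^2} = 1 + 2\sum_{n=1}^{\infty} r^n \cos\bigl(n(\phi-\theta)\bigr).$$
Each term $r^n\cos(n(\phi-\theta))$ is a homogeneous harmonic polynomial of degree $n$ in $(x,y)$, since $r^n\cos(n\phi) = \operatorname{Re}(x+iy)^n$ and $r^n\sin(n\phi) = \operatorname{Im}(x+iy)^n$. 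For $n=1$ these are $x$ and $y$; for $n=2$ they are $x^2-y^2$ and $2xy$. Expanding $\cos(n(\phi-\theta)) = \cos(n\phi)\cos(n\theta)+\sin(n\phi)\sin(n\theta)$, the Taylor polynomial of $K(\cdot,\cdot,\theta)$ through order two is
$$K(x,y,\theta) = 1 + 2(x\cos\theta + y\sin\theta) + 2(x^2-y^2)\cos(2\theta) + 4xy\sin(2\theta) + O(r^3),$$
and reading off the linear and quadratic coefficients at the origin yields all five identities immediately.

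The one step requiring care is the identification of the Fourier series with the Taylor series of $K$. This is justified because the series converges absolutely and uniformly on compact subsets of the open unit disk, so it may be differentiated termwise; as each term is a polynomial, the partial derivatives of order $k$ at the origin come solely from the degree-$k$ term. If one wishes to avoid invoking the series identity, the formulas can equally well be verified by direct quotient-rule differentiation of $(1-x^2-y^2)/D$ with $D := (x-\cos\theta)^2+(y-\sin\theta)^2$: at the origin $D=1$ and $1-r^2=1$, so the first-order terms collapse to $-\partial_x D = 2\cos\theta$ and $-\partial_y D = 2\sin\theta$, while harmonicity of $K$ forces $\partial_y^2 K = -\partial_x^2 K$ and thus halves the second-order bookkeeping. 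I expect no real obstacle here; the main value of the Fourier-series route is conceptual, as it makes transparent why the $k$-th derivatives of $K$ at the origin are exactly $k$-fold trigonometric harmonics in $\theta$.
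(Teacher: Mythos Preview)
Your argument is correct. The paper does not supply a proof beyond the remark ``can be shown by direct calculation,'' i.e., brute-force quotient-rule differentiation of the rational function $K$. Your route via the classical Fourier expansion $K = 1 + 2\sum_{n\ge 1} r^n\cos(n(\phi-\theta))$ is genuinely different and more conceptual: by identifying $r^n\cos(n\phi)$ and $r^n\sin(n\phi)$ with the harmonic polynomials $\operatorname{Re}(x+iy)^n$ and $\operatorname{Im}(x+iy)^n$, the Taylor coefficients of $K$ at the origin are read off without any differentiation, and the appearance of $\cos(k\theta)$, $\sin(k\theta)$ in the $k$-th derivatives is explained rather than merely observed. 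The direct-calculation approach buys only that it is self-contained and requires no series identity; your approach buys transparency and scales immediately to higher-order derivatives (relevant to the closing remarks of Section~\ref{planar} about degenerate equilibria). Your justification for termwise differentiation---uniform convergence on compact subsets of the open disk---is the right one.
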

By Lemma \ref{rep_K}, we obtain formulas for the values of the derivatives of a harmonic function at the origin by interchanging differentiation and integration.
\begin{Prop}\label{fc_h}
Let a harmonic function on the unit disk be given by
$$ h(x,y) = \frac{1}{ 2 \pi } \int_0^{2 \pi}\alpha(\theta) K(x,y,\theta) d \theta,$$
where $\alpha$ has a Fourier series expansion $$\alpha(\theta) = a_0 + \sum_{k = 0}^{\infty} \left(a_k\cos(k \theta) + b_k \sin(k \theta)\right).$$
Then, the values of the partial differentials of $h$ at the origin are
$$\frac{\partial h}{\partial x}(0,0) =2 a_1.$$
$$\frac{\partial h}{\partial y}(0,0) =2 b_1.$$
$$\frac{\partial^2 h}{\partial x^2}(0,0) =  4 a_2.$$
$$ \frac{\partial^2 h }{\partial x \partial y}(0,0) = 4 b_2.$$
$$\frac{\partial^2 h}{\partial y^2}(0,0) =  -4 a_2.$$
\end{Prop}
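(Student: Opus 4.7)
The approach is to differentiate the Poisson-integral representation of $h$ under the integral sign, substitute the explicit values of the partial derivatives of $K$ at the origin supplied by Lemma \ref{rep_K}, and then pick out the relevant Fourier coefficients of $\alpha$ via orthogonality of the trigonometric system on $[0,2\pi]$.

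First, I would justify the interchange of differentiation and integration. For $(x,y)$ in any compact neighborhood of the origin strictly inside the unit disk, the denominator $(x-\cos\theta)^2+(y-\sin\theta)^2$ is bounded away from zero uniformly in $\theta$, so $K(x,y,\theta)$ and its $(x,y)$-partials of every order are continuous and uniformly bounded in $\theta$. Since $\alpha$ is bounded, a standard dominated-convergence argument (essentially the same as the one implicit in Lemma \ref{rep_h}) permits one to differentiate under the integral, yielding
$$
\frac{\partial^{i+j} h}{\partial x^{i}\partial y^{j}}(0,0) \;=\; \frac{1}{2\pi}\int_{0}^{2\pi}\alpha(\theta)\,\frac{\partial^{i+j} K}{\partial x^{i}\partial y^{j}}(0,0,\theta)\,d\theta
$$
for the five pairs $(i,j)$ that appear in the statement.

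Substituting the values given in Lemma \ref{rep_K} then turns each of these quantities into a trigonometric moment of $\alpha$: the integrand becomes $\alpha(\theta)$ times $2\cos\theta$, $2\sin\theta$, $4\cos(2\theta)$, $4\sin(2\theta)$, or $-4\cos(2\theta)$, respectively. Inserting the Fourier expansion $\alpha(\theta)=a_{0}+\sum_{k\geq 1}\bigl(a_{k}\cos(k\theta)+b_{k}\sin(k\theta)\bigr)$ into each of these integrals and invoking orthogonality of the system $\{1,\cos(k\theta),\sin(k\theta)\}$ on $[0,2\pi]$ collapses all cross terms, leaving only the $a_{1}$, $b_{1}$, $a_{2}$, $b_{2}$ contributions with precisely the multiplicative constants asserted in the proposition.

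No real obstacle is expected; the computation is essentially routine. The only points deserving care are (a) verifying that the normalization of the Fourier coefficients is consistent with the quoted numerical factors, and (b) legitimizing the termwise exchange of the infinite Fourier series with the integral, which follows from uniform convergence when $\alpha$ is smooth enough, or from an $L^{2}$/Parseval argument combined with the uniform boundedness of the $K$-derivatives in the general bounded case.
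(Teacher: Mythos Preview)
Your proposal is correct and follows exactly the approach the paper takes: the paper derives Proposition~\ref{fc_h} directly from Lemma~\ref{rep_K} by interchanging differentiation and integration, and then reading off the Fourier coefficients via orthogonality. Your caveat (a) about matching the normalization of the Fourier coefficients to the stated constants is well placed and is the only genuinely delicate step in the computation.
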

Combining the results above, we obtain a sufficient condition for the Fourier coefficients whereby $V+h$ attains a minimum at the origin.
\begin{Prop}
Let $V$ be a potential function of an HHD and $h$ be a harmonic function that satisfies the condition in Proposition \ref{fc_h}. If $V$ assumes a minimum at the origin and $a_1 = b_1=0$, then the origin is a critical point of $V+h$.  Furthermore, if $V$ is the potential function constructed in Main Theorem \ref{Lya}, then the condition
\begin{equation}\label{f_c}
0 \leq a_2 ^2 + b_2 ^2 < \frac{\left( {\rm tr} D {\bf F} _0 \right)^2}{64}
\end{equation}
 ensures that $V+h$ also assumes a minimum at ${\bf 0}$.
\end{Prop}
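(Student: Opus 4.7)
The plan is to reduce this to a computation of the gradient and Hessian of $V+h$ at the origin, using Proposition \ref{fc_h} to convert the Fourier-coefficient conditions into conditions on these derivatives. The proof splits cleanly along the two claims in the statement, and the main observation that makes everything tight is that in the planar case ($n=2$) the potential function from Main Theorem \ref{Lya} has a very specific Hessian at the origin, namely $\tfrac{\rho}{2}I$ where $\rho = {\rm tr}\,{\rm Hess}\,\tilde V({\bf 0}) = -\nabla\cdot{\bf F}({\bf 0}) = -{\rm tr}\,D{\bf F}_{\bf 0}$.

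For the critical point claim, I would simply observe that $\nabla V({\bf 0}) = {\bf 0}$ since $V$ attains a minimum there, and by Proposition \ref{fc_h}, $\nabla h({\bf 0}) = (2a_1, 2b_1)$. Thus $a_1 = b_1 = 0$ forces $\nabla(V+h)({\bf 0}) = {\bf 0}$, so the origin is critical.

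For the second claim, when $V$ is the specific potential from Main Theorem \ref{Lya}, one has ${\rm Hess}\,V({\bf 0}) = \tfrac{\rho}{2}I$, and Proposition \ref{fc_h} gives
\begin{equation*}
{\rm Hess}\,h({\bf 0}) \;=\; \begin{pmatrix} 4a_2 & 4b_2 \\ 4b_2 & -4a_2 \end{pmatrix},
\end{equation*}
so that
\begin{equation*}
{\rm Hess}(V+h)({\bf 0}) \;=\; \begin{pmatrix} \tfrac{\rho}{2}+4a_2 & 4b_2 \\ 4b_2 & \tfrac{\rho}{2}-4a_2 \end{pmatrix}.
\end{equation*}
Its determinant is $\tfrac{\rho^2}{4} - 16(a_2^2+b_2^2)$, and positive definiteness follows from the standard leading-minor criterion once this determinant is positive. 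Using $\rho = -{\rm tr}\,D{\bf F}_{\bf 0}$ (hence $\rho > 0$ by the hypothesis of Main Theorem \ref{Lya}), the determinant inequality reads exactly $a_2^2 + b_2^2 < ({\rm tr}\,D{\bf F}_{\bf 0})^2/64$, which is the stated condition. The upper-left entry $\tfrac{\rho}{2}+4a_2$ is automatically positive under this condition, since $16a_2^2 \leq 16(a_2^2+b_2^2) < \rho^2/4$ forces $|4a_2| < \rho/2$.

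There is no real obstacle here; it is a linear-algebraic verification once Propositions \ref{fc_h} and Main Theorem \ref{Lya} are on the table. The only subtlety worth recording explicitly in the proof is the identity $\rho = -{\rm tr}\,D{\bf F}_{\bf 0}$ tying the $\rho$ from the construction of $V$ to the trace of the Jacobian appearing in the statement.
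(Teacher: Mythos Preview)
Your proof is correct and follows essentially the same route as the paper: compute $\nabla(V+h)({\bf 0})$ and ${\rm Hess}(V+h)({\bf 0})$ from Proposition~\ref{fc_h} and the explicit Hessian $\tfrac{\rho}{2}I$ coming from Main Theorem~\ref{Lya}, then reduce positive definiteness to the determinant condition $\tfrac{\rho^2}{4}-16(a_2^2+b_2^2)>0$. The only cosmetic difference is that the paper invokes the trace--determinant criterion for positive definiteness whereas you use Sylvester's leading-minor test; your handling of the sign in $\rho=-{\rm tr}\,D{\bf F}_{\bf 0}$ is in fact more careful than the paper's, though it is immaterial since only $\rho^2$ enters.
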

\begin{proof}
By Proposition \ref{fc_h}, we immediately obtain that $\nabla ( V + h) ({\bf 0})= {\bf 0} $ if $a_1 = b_1=0$. Let V be the potential function constructed in Main Theorem \ref{Lya}. The Hessian of $V+h$ at the origin is
$$
\left(\begin{array}{cc}
	\frac{\rho}{2} + 4 a_2 & 4 b_2\\
	4 b_2 &\frac{\rho}{2} -4 a_2
	\end{array}
\right),
$$
where $\rho =  {\rm tr} D {\bf F} _0 ={\rm tr}\, {\rm Hess}\, V$.
As the trace of this matrix is positive, it is positive definite if its determinant is positive. This is equivalent to $\frac{\rho^2}{4} - 16 \left( a_2^2 + b_2^2 \right) > 0$.
\end{proof}
 ${\dot V}+{\dot h}$ is now considered. By calculations similar to those above, the following results are obtained.
\begin{Lem}
The values of the partial differentials of $L[{\bf F}]$ at the origin are
$$L[{\bf F}](0,0,\theta) = 0.$$
$$\frac{\partial L[{\bf F}]}{\partial x}(0,0,\theta) = 2\left(\cos(\theta) \frac{\partial f_1}{\partial x} + \sin(\theta) \frac{\partial f_2}{\partial x}\right).$$
$$\frac{\partial L[{\bf F}]}{\partial y}(0,0,\theta) = 2\left(\cos(\theta) \frac{\partial f_1}{\partial y} + \sin(\theta) \frac{\partial f_2}{\partial y}\right).$$
$$\frac{\partial^2 L[{\bf F}]}{\partial x^2}(0,0,\theta) = 8\left(\cos(2 \theta) \frac{\partial f_1}{\partial x} + \sin(2 \theta) \frac{\partial f_2}{\partial x}\right) + 2\left(\cos(\theta)  \frac{\partial^2 f_1}{\partial x^2} + \sin(\theta) \frac{\partial^2 f_2}{\partial x^2}\right).$$
$$\frac{\partial^2 L[{\bf F}]}{\partial x \partial y}(0,0,\theta) = $$
$$4\cos(2 \theta)\left(\frac{\partial f_1}{\partial y} - \frac{\partial f_2}{\partial x}\right) + 4 \sin(2 \theta)\left( \frac{\partial f_1}{\partial x} + \frac{\partial f_2}{\partial y}\right)+ 2\left(\cos(\theta)  \frac{\partial^2 f_1}{\partial x \partial y} + \sin(\theta) \frac{\partial^2 f_2}{\partial x \partial y}\right).$$
$$\frac{\partial^2 L[{\bf F}]}{\partial y^2}(0,0,\theta) =  8 \left(\cos(2 \theta) \frac{\partial f_1}{\partial y} + \sin(2 \theta) \frac{\partial f_2}{\partial y}\right) + 2\left(\cos(\theta)  \frac{\partial^2 f_1}{\partial y^2} + \sin(\theta) \frac{\partial^2 f_2}{\partial y^2}\right).$$
\end{Lem}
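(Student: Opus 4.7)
The plan is to compute each partial derivative by direct application of the Leibniz rule to $L[{\bf F}] = f_1\,K_x + f_2\,K_y$ (writing $K_x = \partial K/\partial x$, etc.), then evaluate at the origin, exploiting two facts: the origin is an equilibrium point so $f_1(0,0)=f_2(0,0)=0$, and Lemma \ref{rep_K} supplies the values of $K_x$, $K_y$, $K_{xx}$, $K_{xy}$, $K_{yy}$ at the origin explicitly. The value $L[{\bf F}](0,0,\theta)=0$ is then immediate.

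For the first-order derivatives, I would expand $\partial_x L[{\bf F}] = (\partial_x f_1)K_x + f_1 K_{xx} + (\partial_x f_2)K_y + f_2 K_{xy}$; at the origin the two $f_i$-terms vanish, and inserting $K_x(0,0,\theta)=2\cos\theta$ and $K_y(0,0,\theta)=2\sin\theta$ yields the claimed formula. The $\partial_y$ case is symmetric.

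For the second-order derivatives I would differentiate once more. The key observation is that after two differentiations every term in the expansion either carries a factor of $f_i$ (which vanishes at the origin) together with a third-order derivative of $K$, or else carries a derivative of $f_i$ multiplied by a derivative of $K$ of order at most two. Hence the third-order derivatives of $K$ never appear in the final values, and only the data from Lemma \ref{rep_K} is needed. For $\partial_x^2 L[{\bf F}]$, a direct expansion gives
$$(\partial_x^2 f_1)K_x + 2(\partial_x f_1)K_{xx} + f_1 K_{xxx} + (\partial_x^2 f_2)K_y + 2(\partial_x f_2)K_{xy} + f_2 K_{xxy},$$
and evaluation at the origin with $K_{xx}(0,0,\theta)=4\cos(2\theta)$ and $K_{xy}(0,0,\theta)=4\sin(2\theta)$ produces the stated expression. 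The formula for $\partial_y^2 L[{\bf F}]$ follows analogously, using $K_{yy}(0,0,\theta)=-4\cos(2\theta)$.

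The only real obstacle is the bookkeeping for the mixed second derivative $\partial_x \partial_y L[{\bf F}]$, since both $\partial_x$ and $\partial_y$ must act on each factor in the product. Dropping the $f_i(0,0)$-terms, I obtain the combination $(\partial_y f_1)K_{xx} + (\partial_x f_1)K_{xy} + (\partial_y f_2)K_{xy} + (\partial_x f_2)K_{yy}$ together with the $K_x,K_y$ contributions coming from $(\partial_x\partial_y f_i)$. Regrouping via $K_{xx}=-K_{yy}=4\cos(2\theta)$ and $K_{xy}=4\sin(2\theta)$ produces $4\cos(2\theta)(\partial_y f_1 - \partial_x f_2) + 4\sin(2\theta)(\partial_x f_1 + \partial_y f_2) + 2(\cos\theta\,\partial_x\partial_y f_1 + \sin\theta\,\partial_x\partial_y f_2)$. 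No conceptual difficulty remains beyond verifying this grouping carefully.
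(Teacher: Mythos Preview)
Your proposal is correct and matches the paper's approach exactly: the paper offers no detailed proof of this lemma, stating only that ``by calculations similar to those above, the following results are obtained,'' i.e.\ direct computation via the Leibniz rule applied to $L[{\bf F}]=f_1K_x+f_2K_y$, together with the equilibrium condition $f_1(0,0)=f_2(0,0)=0$ and the values of the partial derivatives of $K$ at the origin from Lemma~\ref{rep_K}. Your explicit bookkeeping, including the observation that third-order derivatives of $K$ never survive evaluation at the origin, is precisely the content of that direct calculation.
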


\begin{Prop}\label{rep_dot}
Let a harmonic function on the unit disk be given by
$$ h(x,y) = \frac{1}{ 2 \pi } \int_0^{2 \pi}\alpha(\theta) K(x,y,\theta) d \theta,$$
where $\alpha$ has a Fourier series expansion $$\alpha(\theta) = a_0 + \sum_{k = 0}^{\infty} \left(a_k\cos(k \theta) + b_k \sin(k \theta)\right).$$
Then, the values of partial differentials of ${\dot h}$ at the origin are
$${\dot h}(0,0) = 0.$$
$$\frac{\partial \dot{h}}{\partial x}(0,0) = 2\left(a_1 \frac{\partial f_1}{\partial x} + b_1 \frac{\partial f_2}{\partial x}\right).$$
$$\frac{\partial \dot{h}}{\partial y}(0,0) = 2\left(a_1 \frac{\partial f_1}{\partial y} + b_1 \frac{\partial f_2}{\partial y}\right).$$
$$\frac{\partial^2 \dot{h}}{\partial x^2}(0,0) =  8 \left( a_2 \frac{\partial f_1}{\partial x} + b_2 \frac{\partial f_2}{\partial x}\right) + 2\left(a_1  \frac{\partial^2 f_1}{\partial x^2} + b_1 \frac{\partial^2 f_2}{\partial x^2}\right).$$
$$ \frac{\partial^2 \dot{h} }{\partial x \partial y}(0,0) = 4 a_2 \left( \frac{\partial f_1}{\partial y} -\frac{\partial f_2}{\partial x}\right)+ 4 b_2 \left( \frac{\partial f_1}{\partial x} + \frac{\partial f_2}{\partial y}\right)+ 2\left(a_1 \frac{\partial^2 f_1}{\partial x \partial y} + b_1 \frac{\partial^2 f_2}{\partial x \partial y}\right).$$
$$\frac{\partial^2 \dot{h}}{\partial y^2}(0,0) =  8 \left( a_2 \frac{\partial f_1}{\partial y} + b_2 \frac{\partial f_2}{\partial y}\right) + 2\left(a_1  \frac{\partial^2 f_1}{\partial y^2} + b_1 \frac{\partial^2 f_2}{\partial y^2}\right).$$
\end{Prop}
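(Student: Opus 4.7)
The plan is to mirror the argument already used in Proposition \ref{fc_h}, replacing the role of $\partial^{k}K/\partial x^{i}\partial y^{j}(0,0,\theta)$ with $\partial^{k}L[{\bf F}]/\partial x^{i}\partial y^{j}(0,0,\theta)$ as recorded in the preceding lemma. First I would invoke Lemma \ref{rep_h} to write
$$\dot h(x,y) = \frac{1}{2\pi}\int_0^{2\pi}\alpha(\theta)\,L[{\bf F}](x,y,\theta)\,d\theta,$$
and then justify interchanging differentiation and integration. Since $\alpha$ is bounded on $S^1$, $K$ is real-analytic in $(x,y)$ on any compact subset of the open unit disk, and ${\bf F}$ is $C^2$, the integrand and its partial derivatives up to order two are jointly continuous in $\theta$ and $(x,y)$ on a neighborhood of the origin; hence the usual dominated-convergence-type argument applies and each derivative $\partial^{k}\dot h/\partial x^{i}\partial y^{j}(0,0)$ equals the integral of $\alpha(\theta)$ against $\partial^{k}L[{\bf F}]/\partial x^{i}\partial y^{j}(0,0,\theta)$.

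Next I would substitute the explicit expressions for the partial derivatives of $L[{\bf F}]$ at the origin supplied by the lemma immediately preceding the proposition. Each such expression is a trigonometric polynomial in $\theta$ whose coefficients are partial derivatives of ${\bf F}$ at the origin (which are constants with respect to $\theta$ and may be pulled outside the integral).

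At this point the proof reduces to evaluating integrals of the form $\frac{1}{2\pi}\int_0^{2\pi}\alpha(\theta)\cos(k\theta)\,d\theta$ and $\frac{1}{2\pi}\int_0^{2\pi}\alpha(\theta)\sin(k\theta)\,d\theta$ for $k = 1,2$. I would insert the Fourier expansion $\alpha(\theta) = a_{0} + \sum_{k\geq 1}(a_{k}\cos(k\theta)+b_{k}\sin(k\theta))$ and apply orthogonality of $\{1,\cos(k\theta),\sin(k\theta)\}_{k\geq 1}$ on $[0,2\pi)$, which selects exactly the Fourier coefficient of matching frequency. Reading this off produces the claimed closed-form expressions: the $\cos\theta,\sin\theta$ factors appearing in $\partial L[{\bf F}]/\partial x$ and $\partial L[{\bf F}]/\partial y$ isolate $a_{1},b_{1}$ (giving the first-order formulas), while the $\cos(2\theta),\sin(2\theta)$ factors in the second-order derivatives of $L[{\bf F}]$ isolate $a_{2},b_{2}$, and the residual $\cos\theta,\sin\theta$ factors multiplying the second derivatives of ${\bf F}$ again contribute $a_{1},b_{1}$ terms.

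There is no conceptual obstacle: the argument is essentially a direct computation chained together from Lemma \ref{rep_h} and the derivative formulas for $L[{\bf F}]$, together with Fourier orthogonality. The main thing to watch is the bookkeeping of numerical coefficients, in particular keeping the normalization $\frac{1}{2\pi}$ consistent with the Fourier convention used in Proposition \ref{fc_h} so that the final constants match those stated; once that is aligned, each of the five displayed identities drops out termwise.
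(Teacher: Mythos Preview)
Your proposal is correct and follows exactly the route the paper intends: the paper presents this proposition without a separate proof, indicating only that it is obtained ``by calculations similar to those above,'' i.e., by the same interchange-of-differentiation-and-integration argument used for Proposition~\ref{fc_h}, now applied to the representation $\dot h = \frac{1}{2\pi}\int_0^{2\pi}\alpha(\theta)L[{\bf F}](x,y,\theta)\,d\theta$ from Lemma~\ref{rep_h} together with the explicit values of the derivatives of $L[{\bf F}]$ at the origin recorded in the preceding lemma. Your caution about tracking the normalization so that the numerical constants match those in Proposition~\ref{fc_h} is well placed, but there is nothing to add beyond that bookkeeping.
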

Thus, the problem of obtaining a decomposition that yields a Lyapunov function is reformulated as an optimization problem for Fourier coefficients. By Proposition \ref{rep_dot}, the following result is immediately obtained.
\begin{Prop}
Let ${\bf F}:{\mathbb R}^2 \to {\mathbb R}^2$ be a vector field with an equilibrium point at the origin, and let ${\bf F} = - \nabla V + {\bf u}$ be an HHD and $h$ be a harmonic function that satisfies the condition in Proposition \ref{fc_h}. If $V$ assumes a minimum at $0$ and $a_1 = b_1=0$, then $0$ is a critical point of $\dot{V}+\dot{h}$.
\end{Prop}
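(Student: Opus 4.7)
The plan is to combine two results already established in the paper: Proposition \ref{min_p}, which treats $\dot V$, and Proposition \ref{rep_dot}, which treats $\dot h$. Since gradients are linear in their arguments, showing that the origin is a critical point of $\dot V + \dot h$ reduces to showing it is a critical point of each summand separately.

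First, I would invoke Proposition \ref{min_p}. The hypothesis that $V$ attains a minimum at the origin, together with the fact that ${\bf F} = -\nabla V + {\bf u}$ is an HHD for a vector field with equilibrium at the origin, is precisely what Proposition \ref{min_p} requires. It gives immediately that $\dot V({\bf 0}) = 0$ and that the origin is a critical point of $\dot V$, i.e.\ $\nabla \dot V({\bf 0}) = {\bf 0}$.

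Next, I would read off the two first-order formulas from Proposition \ref{rep_dot}, namely
\[
\frac{\partial \dot h}{\partial x}(0,0) = 2\left(a_1 \frac{\partial f_1}{\partial x} + b_1 \frac{\partial f_2}{\partial x}\right), \qquad \frac{\partial \dot h}{\partial y}(0,0) = 2\left(a_1 \frac{\partial f_1}{\partial y} + b_1 \frac{\partial f_2}{\partial y}\right).
\]
Substituting the hypothesis $a_1 = b_1 = 0$ makes both vanish, so $\nabla \dot h({\bf 0}) = {\bf 0}$. (Note that ${\dot h}(0,0) = 0$ is also given by the same proposition, which is consistent with $-\nabla V + {\bf u}$ vanishing at the origin.)

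Adding the two conclusions gives $\nabla(\dot V + \dot h)({\bf 0}) = {\bf 0}$, which is the claim. There is no genuine obstacle here: the proposition is essentially a bookkeeping corollary assembling Proposition \ref{min_p} and the first-order part of Proposition \ref{rep_dot}. The only thing to keep in mind is that the harmonic modification $h$ does not disturb the fact that $-\nabla(V+h) + ({\bf u}+\nabla h)$ remains an HHD of the same ${\bf F}$, so computing $\dot V + \dot h$ along ${\bf F}$ is legitimate.
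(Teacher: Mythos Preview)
Your proposal is correct and follows exactly the paper's own argument: invoke Proposition \ref{min_p} to obtain $\nabla \dot V({\bf 0}) = {\bf 0}$, then use Proposition \ref{rep_dot} with $a_1 = b_1 = 0$ to obtain $\nabla \dot h({\bf 0}) = {\bf 0}$, and add. The paper's proof is in fact more terse than yours, but the structure is identical.
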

\begin{proof}
From Proposition \ref{min_p}, we have $\nabla \dot V ({\bf 0})= 0$. Therefore, it suffices to show that $\nabla \dot h ({\bf 0}) = 0$, but this is obvious from Proposition \ref{rep_dot}.
\end{proof}

To obtain a Lyapunov function of the origin, it suffices that $ \dot{V}+\dot{h} \leq 0$. Thus, the Fourier coefficients $a_2, b_2$ should satisfy the condition (\ref{f_c}) so that the Hessian of $\dot{V}+\dot{h}$ is negative definite. However, this is not always possible because in the calculation above, only the first-order derivatives were considered. For example, the negative definiteness of  $\dot{V}+\dot{h}$ cannot be ensured in the case of $D {\bf F}_{\bf 0} = 0$. This is a limitation of the construction of Lyapunov functions based on the potential functions of HHDs. In general, higher-order derivatives and, therefore, Fourier coefficients for larger $n$ should be considered for such equilibrium points.

\section{Concluding remarks}
Even though the results obtained in this study cannot be applied to ``real'' problems, the construction of Lyapunov functions using HHD was shown to be possible. There are two questions to be answered regarding the general construction of Lyapunov functions of vector fields using HHD:
\begin{enumerate}
\item To what extent is the Lyapunov function globally valid if it is constructed by the proposed method?
\item Under what conditions is it possible to construct Lyapunov functions using Main Theorem \ref{opti_thm}? In that case, how can it be efficiently constructed?
\item Is it possible to construct Lyapunov functions using HHD in the case of vector fields with degenerate Jacobian matrix?
\end{enumerate}
\noindent
The first question is important in the applications because a global Lyapunov function would yield information about the global behavior of solutions. However, the construction in this study does not ensure the validity outside a neighborhood of the equilibrium point.
The second question should be answered if the method proposed here is to be applied to real problems. A certain stability condition is clearly necessary; however, it is not obvious. Furthermore, an optimization problem should be solved to apply Main Theorem \ref{opti_thm}, and therefore an efficient scheme is required.
The third question concerns the superiority of the analysis based on HHD compared to the existing methods (e.g., linearization). An affirmative answer would imply that the stability of an equilibrium point could be determined for a wider class of vector fields.

Moreover, the properties of vector fields with strictly orthogonal HHD should be further studied. In particular, conditions ensuring its existence are particularly important, as such vector fields can be rigorously analyzed in detail. Another problem is to refine Main Theorem \ref{ohhd} so that it provides a characterization of $\alpha$- or $\omega$-limit sets.

For the problem of choosing an HHD that respects the dynamics of vector fields, a decomposition that yields a Lyapunov function is a potential solution. However, this requires further investigation.
\section*{Acknowledgment}
I would like to express my gratitude to Professor Masashi Kisaka for his patient guidance and useful critique of this research. This study was supported by Grant-in-Aid for JSPS Fellows (17J03931). I would like to thank Editage (www.editage.jp) for English language editing. Further, I would like to thank the anonymous reviewers for their valuable comments and suggestions.
\providecommand{\href}[2]{#2}
\providecommand{\arxiv}[1]{\href{http://arxiv.org/abs/#1}{arXiv:#1}}
\providecommand{\url}[1]{\texttt{#1}}
\providecommand{\urlprefix}{URL }

\medskip
Received November  2017; 1st revision  January 2018; final revision  May 2018.
\medskip

\end{document}